\theoremstyle{plain}
\newtheorem{introtheo}{Th\'eor\`eme}
\newtheorem{introprop}[introtheo]{Proposition}
\theoremstyle{plain}
\newtheorem{theo}{Th\'eor\`eme}[section]
\newtheorem{prop}[theo]{Proposition}
\newtheorem{lemme}[theo]{Lemme}
\theoremstyle{definition}
\newtheorem{defi}[theo]{D\'efinition}
\newtheorem{ques}[theo]{Question}
\def\term{\textbf}
\newcommand{\cat}{(\begin{smallmatrix}2&1\\1&1\end{smallmatrix})}
\newcommand{\D}{\mathbb{D}}
\newcommand{\Enl}{\mathrm{Enl}}
\newcommand{\F}{\mathcal{F}}
\newcommand{\fgeod}{(\phi_\Sigma^t)_{t\in\R}}
\newcommand{\fdts}{(\phi_{2,3,7}^t)_{t\in\R}}
\newcommand{\fttq}{(\phi_{3,3,4}^t)_{t\in\R}}
\newcommand{\Fs}{\mathcal{F}^s}
\newcommand{\Fu}{\mathcal{F}^u}
\renewcommand{\ge}{\geqslant}
\newcommand{\GLZ}{\mathrm{GL}_2(\Z)}
\renewcommand{\H}{\mathrm{H}}
\newcommand{\Hy}{\mathbb{H}^2}
\newcommand{\Hopf}{
	\begin{picture}(4.5,0)
	\put(0,-0.8){\includegraphics[height=4.2mm]{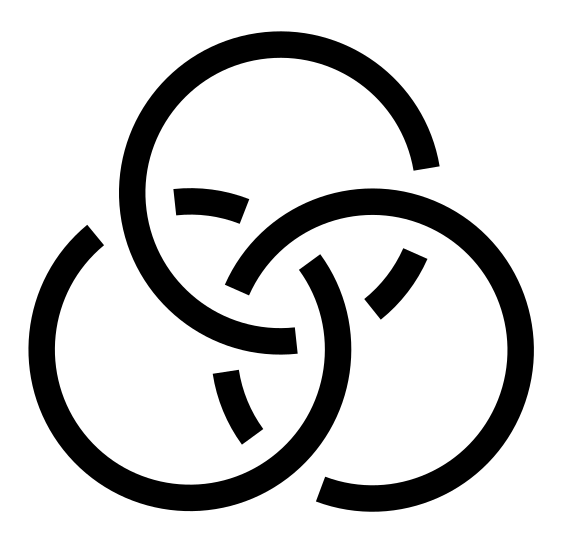}}
	\end{picture}}
\newcommand{\Huit}{
	\begin{picture}(3,0)
	\put(-1,-0.8){\includegraphics[height=4.2mm]{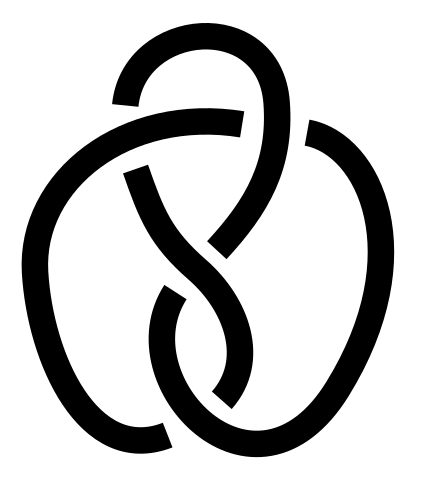}}
	\end{picture}}
\renewcommand{\le}{\leqslant}
\newcommand{\matL}{(\begin{smallmatrix}1&0\\1&1\end{smallmatrix})}
\newcommand{\matR}{(\begin{smallmatrix}1&1\\0&1\end{smallmatrix})}
\renewcommand{\P}{\mathbb{P}}
\newcommand{\Q}{\mathbb{Q}}
\newcommand{\R}{\mathbb{R}}
\newcommand{\Sph}{\mathbb{S}}
\newcommand{\SLZ}{\mathrm{SL}_2(\Z)}
\newcommand{\Spqr}{\Sigma_{p,q,r}}
\newcommand{\Sdts}{\Sigma_{2,3,7}}
\newcommand{\Sttq}{\Sigma_{3,3,4}}
\newcommand{\slk}{\mathrm{Enl}}
\newcommand{\T}{\mathbb{T}}
\newcommand{\U}{\mathrm{T}^1}
\newcommand{\vecgamma}{\accentset{\leftrightarrow}{\gamma}}
\newcommand{\Z}{\mathbb{Z}}
\begin{document}

\title[La courbe en huit et le n\oe ud de huit]{La courbe en huit sur les sphères à pointes et le n\oe ud de huit}
\date{16 juin 2022}

\author{Pierre Dehornoy}
\address{Université Grenoble Alpes, Institut Fourier, CNRS, Grenoble, France \text{(au moment de l'écriture)}}
\email[récente]{pierre.dehornoy@univ-amu.fr}

\begin{abstract}
On montre que, dans le fibré unitaire tangent d'une orbisphère hyperbolique à points coniques d'ordre 3, 3, 4, le complémentaire du relevé de la plus courte géodésique périodique est homéomorphe au complémentaire du n\oe ud de huit dans la sphère de dimension 3. 
La preuve repose {\it in fine} sur des calculs d'enlacement. 
\end{abstract}

\maketitle

Quand on rencontre une variété réelle, compacte, sans bord, orientable de dimension 3, il n'est pas évident de l'identifier, même si des algorithmes généraux existent~\cite{Matveev}. 
Un cas très particulier est si la variété fibre sur le cercle avec fibre de genre~0 ou 1. 
Dans le premier cas on a affaire à~$\Sph^2\times\Sph^1$, et dans le second cas elle est déterminée par la classe d'isotopie de l'application monodromie, laquelle est représentée par une classe de conjugaison dans~$\GLZ$\footnote{
Si la fibre est de genre au moins 2, alors le problème est plus compliqué car la variété ne fibre en général pas de façon unique sur le cercle, et il faut résoudre le problème de savoir si deux suspensions distinctes donnent la même variété. 
Les {\it veering triangulations} semblent aujourd'hui un bon outil pour répondre théoriquement à cette question. 
}.

De la même fa\c con, identifier le complémentaire d'un n\oe ud dans une variété réelle de dimension~3 n'est en général pas facile (même pour le n\oe ud trivial dans la sphère réelle~$\Sph^3$ ! \cite{Haken, Kup}), sauf si on sait que ce complémentaire fibre sur le cercle avec fibre de genre 0 ou 1, auquel il suffit d'identifier la monodromie.  
Par exemple, notons \Huit~le n\oe ud de huit dans~$\Sph^3$. 
Un résultat folklorique stipule que \Huit~est fibré, avec fibre de genre 1, et monodromie donnée par (la classe de d'isotopie de l'action sur le tore de) la matrice~$\cat$. 
On en déduit que si un n\oe ud~$k$ dans une variété~$M$ est fibré, avec fibre de genre 1, et monodromie~$\cat$, alors $M\setminus k$ est homéomorphe \`a~$\Sph^3\setminus~\Huit$, et en particulier $M$ est obtenue à partir de~$\Sph^3$ par chirurgie sur \Huit. 

Dans ce texte, on donne deux exemples de cette situation. 
Pour $p, q, r$ trois entiers positifs satisfaisant~$\frac1p+\frac1q+\frac1r<1$, on note~$\Spqr$ l'unique orbisurface hyperbolique qui est une sph\`ere avec trois points coniques d'angles~$\frac{2\pi} p, \frac{2\pi} q, \frac{2\pi} r$. 
Le fibré unitaire tangent~$\U\Spqr$ est une variété réelle (sans point singulier) de dimension~3. 

\vspace{1mm}
\noindent
\begin{minipage}[t]{0.7\textwidth}
\hspace{3mm}Dans le cas~$p=2$, la plus courte géodésique périodique sur~$\Spqr$ est la ``hauteur'' issue du point d'ordre 2; on la note~$h$. 
Elle se relève dans~$\U\Spqr$ en un n\oe ud, not\'e~$\vec h$. 
Dans le cas~$2<p\le q\le r$, le plus courte géodésique périodique sur~$\Spqr$ est une courbe en forme de~$8$, qui tourne autour des deux points d'ordre~$p$ et $q$; on la note~$\gamma_8$. 
Une fois orientée (arbitrairement), elle se relève dans~$\U\Spqr$ en un n\oe ud, not\'e~$\vec\gamma_8$.

\begin{introtheo}\label{Th}
\begin{enumerate}
\item La variété~$\U\Sdts\setminus\vec h$ est homéomorphe \`a $\Sph^3\setminus~\Huit$.
\item La variété~$\U\Sttq\setminus\vec\gamma_8$ est homéomorphe \`a $\Sph^3\setminus~\Huit$.
\end{enumerate}
\end{introtheo}
\end{minipage}
\begin{minipage}[t]{0.3\textwidth}
\begin{picture}(0,0)(0,0)
\put(5,-13){\includegraphics[height=17mm]{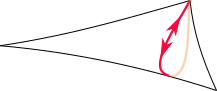}}
\put(8,-39){\includegraphics[height=30mm]{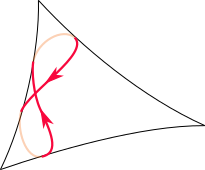}}
\put(32,-2){$h$}
\put(17,-28){$\gamma_8$}
\end{picture}
\end{minipage}

\vspace{3mm}
Ces résultats ne sont pas nouveaux au sens où une telle identification n'est pas surprenante (voir le paragraphe suivant\footnote{Ils avaient peut-être été identifiés par Thurston, même si ses notes ne donnent pas d'énoncé explicite~\cite[Chap 4]{ThurstonNotes}.}), et on peut probablement l'obtenir de plusieurs fa\c cons par des moyens élémentaires. 
En revanche la technique nous semble intéressante~: elle repose principalement sur des calculs de nombres d'enlacement. 
Le premier résultat avait été énoncé dans~\cite{GenusOne} avec une preuve proche\footnote{En réalité, le résultat avait d'abord été remarqué à l'aide de calculs informatiques avec la stratégie détaillée dans cette note, avant d'être publié avec une preuve plus directe, mais aussi plus ``parachutée''.}, mais le second n'a jamais été démontré ainsi. 
\`A la suite de l'écriture de cette note, une deuxième a été écrite~\cite{Matrix}, en anglais, avec une autre preuve du théorème~\ref{Th}, et trois cas supplémentaires. La technique est un peu différente: on y décrit explicitement la surface de genre 1 qui correspond à la fibre de la monodromie et on y calcule l'application de premier retour.

\medskip
\noindent{\bf Les chirurgies sur le n\oe ud de huit.} 
On peut se demander s'il y a beaucoup d'énoncés semblables au th\'eor\`eme~\ref{Th} à espérer. 
En effet, partant d'un n\oe ud~$k$ dans la sph\`ere tridimensionnelle r\'eelle~$\Sph^3$, on obtient une infinité de 3-variétés réelles en faisant des chirurgies sur ledit n\oe ud. 
Une telle chirurgie est définie par la classe d'isotopie de~$k$ et par un nombre rationnel~$b/a$; on note $\Sph^3(k, b/a)$ la variété ainsi obtenue. 
Thurston a démontré que, lorsque~$k$ est non trivial, toutes les variétés ainsi obtenues ---sauf un nombre fini--- sont hyperboliques. 
En particulier ce ne peut être un fibré unitaire tangent, ni même un fibré de Seifert. 
Dans le cas du n\oe ud de huit, Thurston a donn\'e la liste : si le coefficient de chirurgie est différent de~$0, \pm 1, \pm 2, \pm 3,\pm 4$, alors la variété est hyperbolique.

Naturellement, on se demande quelles sont les variétés obtenues dans les neuf cas spécifiques ci-dessus. 
Si on accepte le théorème de géométrisation, on sait que les variétés sont soit des suspensions d'automorphisme du tore bidimensionnel~$\T^2$ (correspondant à la géométrie Sol), soit des fibrés de Seifert (correspondant aux six autres géométries non hyperboliques en dimension~3).

Pour le n\oe ud de huit, le cas $b/a=0$ est le plus connu : le caractère fibré et le calcul de la monodromie impliquent que $\Sph^3(~\Huit, 0)$ est la variété obtenue par suspension de l'automorphisme~$\cat$ de~$\T^2$. 
Un calcul élémentaire montre que les autres cas donnent des sphères d'homologie rationnelle, et donc ne peuvent donner des variétés-suspension. 
Il s'agit donc de fibrés de Seifert. 
Le n\oe ud de huit étant chiral, les variétés $\Sph^3(~\Huit, b/a)$ et~$\Sph^3(~\Huit, -b/a)$ ne diffèrent que par l'orientation. 
Il ne reste alors que quatre cas à comprendre, et donc quatre fibrés de Seifert à identifier. 
Le théorème~\ref{Th} peut en fait être précisé en disant que les variétés $\Sph^3(~\Huit, \pm 1)$ sont hom\'eomorphes \`a~$\U\Sdts$ (\`a orientation pr\`es), tandis que les variétés $\Sph^3(~\Huit, \pm 3)$ sont hom\'eomorphes \`a~$\U\Sttq$ (nous ne donnerons pas la preuve de ce renforcement qui semble requérir une étude plus fine de la monodromie de $~\Huit$). 

Notons néanmoins que la stratégie de preuve du théorème~\ref{Th} va plutôt à l'envers~: elle permet d'identifier certains complémentaires de courbes particulières dans des fibrés unitaires tangents. 
Donc pour comprendre $\Sph^3(~\Huit, \pm 2)$ ou~$\Sph^3(~\Huit, \pm 4)$ par la même stratégie, il faudrait d'abord deviner la réponse, ce que nous n'avons pas fait. 

\medskip

\noindent{\bf Stratégie de preuve : flots et sections de Birkhoff.} Pour d\'emontrer le th\'eor\`eme~\ref{Th}, on va utiliser un outil dynamique canoniquement associé aux fibrés unitaires tangents : le flot géodésique. 
Celui-ci est défini sur le fibré unitaire tangent de toute surface orbifoldique. 
Lorsque celle-ci est hyperbolique, le flot est Anosov, ce qui fait que sa dynamique est assez bien comprise (bien que chaotique). 
Ici on s'intéresse donc aux deux flots géodésiques sur les variétés~$\U\Sdts$ et~$\U\Sttq$, notés respectivement~$\fdts$ et~$\fttq$.
Les courbes $\vec h$ et~$\vec \gamma_8$ sont des orbites périodiques de ces deux flots.
On va déduire le théorème~\ref{Th} des énoncés suivants~: 

\begin{introprop}\label{P:h}
\begin{enumerate}
\item L'orbite~$\vec h$ borde une section de Birkhoff pour~$\fdts$, not\'ee~$S_h$. 
\item La surface~$S_h$ est un tore \`a une composante de bord. 
\item L'application de premier retour sur~$S_h$ le long de~$\fdts$ est un diff\'eomorphisme d'Anosov n'ayant aucun point fixe \`a l'int\'erieur de~$S_h$. 
\end{enumerate}
\end{introprop}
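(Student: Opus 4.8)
The plan is to build $S_h$ from the classical Birkhoff annulus of the short geodesic $h$ and then to promote this transverse annulus to a genuine Birkhoff section. First I would consider the immersed surface $A_h\subset\U\Sdts$ made of the unit vectors based at points of $h$ that point into one fixed side of $h$. Away from the cone points $A_h$ is an annulus whose two boundary circles are the two lifts $\vec h$ and $-\vec h$ of $h$; it is transverse to $\fdts$ in its interior, since a vector pointing off to the side is pushed away from $h$ by the flow, and tangent to the flow along its boundary. The feature specific to the $\Sdts$ case is the order-$2$ cone point through which $h$ passes: the cone angle $\pi$ makes $h$ reflect onto itself, so the local $\Z/2$-action identifies a unit vector with its opposite and glues the two boundary circles $\vec h$ and $-\vec h$ into a single orbit. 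This is what will produce a surface with one boundary component rather than two.

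To prove part (1) I would invoke Fried's criterion: a surface transverse to an Anosov flow in its interior, with boundary carried by periodic orbits, is isotopic to a Birkhoff section as soon as every orbit meets it, equivalently as soon as its homology class is positive on the cone of $\fdts$-invariant probability measures. I would verify this positivity through linking computations with $\vec h$: the algebraic intersection number of a periodic orbit $\gamma$ with $S_h$ equals $\Lk(\vec h,\gamma)$ up to the usual normalisation, so it is enough to show $\Lk(\vec h,\gamma)>0$ for every periodic orbit and then to pass to the limit over invariant measures. Geometrically this reduces to a filling property of $h$: lifting to $\Hy$, the full preimage of $h$ should cut the plane into pieces of diameter bounded in terms of the $(2,3,7)$ triangle group, so that no complete geodesic can avoid $h$ for unbounded time. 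Ruling out such trapped orbits---that is, upgrading positivity from periodic orbits to all invariant measures---is the step where I expect the main difficulty.

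For part (2), once $S_h$ is an embedded Birkhoff section bounded by the single orbit $\vec h$, its homeomorphism type is determined by an Euler-characteristic count. The annulus $A_h$ has $\chi=0$; the self-gluing forced by the order-$2$ point, together with the branching contributions coming from the order-$3$ and order-$7$ points, modify this number, and I would tabulate these local contributions to reach $\chi(S_h)=-1$. As $S_h$ is orientable, being cooriented by the flow, and has exactly one boundary circle, the value $\chi(S_h)=-1$ forces $S_h$ to be a once-punctured torus. More robustly, the same conclusion can be read off from part (3): the closed surface obtained by capping $\partial S_h$ carries an Anosov diffeomorphism, hence must be the torus $\T^2$.

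For part (3), the first-return map $\varphi$ of $\fdts$ on $S_h$ is the monodromy of the fibration of $\U\Sdts\setminus\vec h$ over the circle, and the general theory of Birkhoff sections of Anosov flows already makes $\varphi$ pseudo-Anosov, with invariant foliations obtained by intersecting the weak-stable and weak-unstable foliations of $\fdts$ with $S_h$. To upgrade pseudo-Anosov to Anosov I would check that these intersected foliations carry no singularity in the interior of $S_h$, all prong-type data being concentrated on the boundary orbit $\vec h$; then $\varphi$ is honestly Anosov. Finally, an interior fixed point of $\varphi$ would be a periodic orbit meeting $S_h$ exactly once, i.e. one whose intersection number with $S_h$---equal to its linking with $\vec h$---is one; the same linking computation used for part (1) should single out $\vec h$ as the only orbit realising this minimal intersection, and $\vec h$ lies on $\partial S_h$. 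Hence $\varphi$ has no fixed point in the interior of $S_h$, in agreement with the fact that $\cat=\matR\matL$ has a single fixed point on $\T^2$, namely the point blown up to the boundary.
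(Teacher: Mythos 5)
Your proposal has the same overall architecture as the paper's proof: you reduce all three points to linking-number statements about $\vec h$ (positivity against every periodic orbit for existence, self-linking $\pm 1$ for the genus, uniqueness of the minimally-linking orbit for the absence of interior fixed points), which is precisely the reduction the paper performs via Proposition~\ref{P:hEnl} and the Schwartzman--Fried criterion~\ref{T:SFEnl}. The genuine gap is that you never establish any of these linking estimates, and they are the actual mathematical content. In the paper they are proved (Proposition~\ref{P:hEnl}, Section~4) by isotoping all periodic orbits onto the explicit template $\mathcal{T}_{2,3,7}$ embedded in $\Sph^3$ and applying the surgery formula for linking numbers (Lemme~\ref{L:Enlacement}); nothing in your text substitutes for this. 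Your ``filling property of $h$'' could at best replace the estimate needed for point~(1) --- this is essentially the Birkhoff--Fried construction (Th\'eor\`eme~\ref{BirkhoffFried} of the appendix), and your observation that the order-2 cone point merges the two lifts of $h$ into the single orbit $\vec h$ is correct and is exactly why the symmetric lift has one component here --- but you do not verify that $h$ is filling, and filling gives nothing for points~(2) and~(3): it neither computes the self-linking $\Enl_{\U\Sdts}(\vec h, \vec h^{+s})=-1$ nor excludes a periodic orbit crossing the section exactly once.

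Two further problems. First, when you apply Fried's criterion you only test periodic orbits of $\fdts$ itself; the criterion must be tested on the flow extended to the normal blow-up of $\vec h$, which carries four additional periodic orbits on $\nu^1(\vec h)$ (the traces of the stable and unstable leaves), and it is precisely the nonvanishing self-linking (point~(2) of Proposition~\ref{P:hEnl}) that handles these --- this is why the paper needs point~(2) already for existence, not only for the genus. (By contrast, the step you flag as the main difficulty, passing from periodic orbits to all invariant measures, is not a difficulty at all for transitive Anosov flows: that is exactly the content of Fried's theorem~\cite{FriedGeometry}, which one simply cites.) Second, neither of your arguments for point~(2) holds up: the ``branching contributions coming from the order-3 and order-7 points'' are unexplained and dubious, since the Birkhoff annulus over $h$ does not meet the fibers over those cone points; and the fallback argument via point~(3) is circular, because to know that the capped return map is Anosov rather than pseudo-Anosov you must know that the singularity created at the capped boundary point is a regular (2-prong) point, and that is again the self-linking computation. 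The paper's route through Proposition~\ref{P:genre}, giving $\chi(S_h)=-|\slk^s_{\U\Sdts}(S_h,\vec h)|=-1$, is exactly the piece that is missing from your argument.
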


\begin{introprop}\label{P:3huit}
\begin{enumerate}
\item Le triple de l'orbite~$\vec\gamma_8$ borde une section de Birkhoff pour~$\fttq$, not\'ee~$S_8$. 
\item La surface~$S_8$ est un tore \`a une composante de bord. 
\item L'application de premier retour sur~$S_8$ le long de~$\fttq$ est un diff\'eomorphisme d'Anosov n'ayant aucun point fixe \`a l'int\'erieur de~$S_8$. 
\end{enumerate}
\end{introprop}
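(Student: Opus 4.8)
\medskip
The plan is to follow the route already used for Proposition~\ref{P:h}: realize $S_8$ as a desingularized Birkhoff annulus over $\gamma_8$, read its topology off from a self-linking computation, and deduce the dynamics of the return map from the fact that $\fttq$ is Anosov.

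First I would build the candidate surface. Over the geodesic $\gamma_8$ sits its \emph{Birkhoff annulus}, the set of unit vectors based on $\gamma_8$ and pointing into one fixed side; its interior is transverse to $\fttq$ and its two boundary circles are the orbits $\vec\gamma_8$ and $-\vec\gamma_8$. Because $\gamma_8$ has a single self-intersection and encircles the two cone points of order~$3$, this annulus is only immersed: two of its sheets meet along the fiber over the double point, and the fibers over the order-$3$ points are exceptional. I would resolve the crossing by the oriented cut-and-reglue and absorb the exceptional fibers, producing an embedded surface $S_8$ transverse to the flow. The delicate bookkeeping is on the boundary: tracking how the arcs $\vec\gamma_8$ and $-\vec\gamma_8$ reconnect through the resolution and around the order-$3$ fibers, the two circles merge into a single boundary curve running three times along $\vec\gamma_8$ --- the ``triple'' of the statement. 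I would pin this multiplicity to exactly~$3$ by computing the self-linking $\mathrm{lk}(\vec\gamma_8,\vec\gamma_8^{+})$ of a transverse pushoff, i.e.\ the rotation of the tangent framing along $\gamma_8$ imposed by the two cone angles $2\pi/3$; this is the single most important linking computation of the proof.

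Next come the section property and the topology. Transversality in the interior is built into the construction, so it remains to bound the return time, i.e.\ to check that $\gamma_8$ is filling. Here the complement $\Sttq\setminus\gamma_8$ is a union of three discs each carrying exactly one cone point, and a Gauss--Bonnet argument shows that no geodesic can be trapped in such a disc, so every orbit of $\fttq$ meets $S_8$ in bounded time. For the topology I would compute $\chi(S_8)$ from the same linking data --- the resolution of one double sheet together with the exceptional-fiber contributions, balanced against the Seifert Euler number of $\U\Sttq$ --- and obtain $\chi(S_8)=-1$, exactly as for $S_h$. As $S_8$ is orientable with a single boundary circle, it is a one-holed torus, which gives parts~(1) and~(2).

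It remains to treat the first-return map $\phi$ on $S_8$. That $\phi$ is Anosov is inherited from the flow: the strong stable and unstable bundles of $\fttq$ are transverse to $S_8$ and descend to a pair of $\phi$-invariant contracting and expanding foliations. For the absence of interior fixed points I would reason as follows. A fixed point of $\phi$ in $\mathrm{int}(S_8)$ is exactly a periodic orbit of $\fttq$ crossing $S_8$ once; capping the puncture, $\phi$ is conjugate to a linear Anosov automorphism $M$ of $\T^2$ fixing the capped boundary orbit, so the interior fixed points number $|\det(M-I)|-1$. As $M$ is orientation-preserving one has $\det(M-I)=2-\mathrm{tr}\,M$, whence part~(3) is equivalent to $\mathrm{tr}\,M=3$, that is, to the monodromy being conjugate to $\cat$ --- precisely the trace produced by the boundary-framing linking computation. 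Making that computation rigorous, so that the once-crossing count vanishes and $\mathrm{tr}\,M=3$, is the step I expect to be the main obstacle, everything else being modeled on the $\vec h$ case. Once it is in hand, parts~(1)--(3) combine with the folklore identification of the genus-one fibered knot of monodromy $\cat$ to give Theorem~\ref{Th}(2).
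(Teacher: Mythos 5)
Your construction of $S_8$ fails at its very first step, and the failure is structural rather than a matter of bookkeeping. The Birkhoff annulus over $\gamma_8$ has boundary the \emph{antithetic} lift $\vec\gamma_8\cup(-\vec\gamma_8)$, and Fried's desingularization of its self-intersections is an operation performed entirely in the interior of the surface: it cannot alter the boundary, and in particular cannot ``merge'' the two boundary circles into a curve running three times along $\vec\gamma_8$. The orbits $\vec\gamma_8$ and $-\vec\gamma_8$ are two \emph{distinct} periodic orbits of $\fttq$ (on the template $\mathcal{T}_{3,3,4}$ they carry the different codes $(ab)^\Z$ and $(a^2b^2)^\Z$), so no local cut-and-paste can trade one for the other. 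Homology already forbids it: $[\vec\gamma_8]+[-\vec\gamma_8]=0$ holds automatically, whereas $3[\vec\gamma_8]=0$ expresses that $[\vec\gamma_8]$ is a nontrivial $3$-torsion class in $\H_1(\U\Sttq;\Z)=\Z/3\Z$; these are different boundary data, and what your annulus construction proves is the classical Birkhoff--Fried statement (Théorème~\ref{BirkhoffFried} of the appendix), not Proposition~\ref{P:3huit}. Your instinct that the self-linking detects the multiplicity $3$ is sound --- the paper deduces from $\Enl_{\Sttq}(\vec\gamma_8,\vec\gamma_8^{+s})=-1/3\notin\Z$ that $[\vec\gamma_8]\neq0$, hence that the triple is the minimal multiple that can bound --- but this tells you what the boundary \emph{must} be, not that a section with that boundary \emph{exists}.

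This points to the second, equally serious gap: existence and the fixed-point count cannot be obtained from data along $\gamma_8$ alone. The paper's proof is non-constructive: it invokes the Schwartzman--Fried criterion (théorème~\ref{T:SFEnl}), which requires verifying $\Enl_{\Sttq}(\vec\gamma_8,k)<0$ for \emph{every} periodic orbit $k$ of the (blown-up) flow; likewise, the absence of interior fixed points follows from the uniform bound $\Enl_{\Sttq}(\vec\gamma_8,k)\le-2/3$ for all $k\neq\vec\gamma_8$, which guarantees that every other orbit crosses $S_8$ at least twice. These global estimates are exactly Proposition~\ref{P:3huitEnl}, and proving them is the real content of the paper: it requires transporting all periodic orbits onto the template $\mathcal{T}_{3,3,4}$, coding them by words in $a,b$, and applying the surgery linking formula (lemme~\ref{L:Enlacement}). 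Your proposal has no substitute for this machinery: the Gauss--Bonnet filling argument only bounds return times for the (wrong-boundary) annulus section, and your reduction of part~(3) to ``$\mathrm{tr}\,M=3$'' --- which you then attribute to a ``boundary-framing linking computation'' --- is circular, since the trace of the monodromy is not determined by the self-linking of the boundary but precisely by how all other orbits intersect the section. You correctly flag this as the main obstacle, but an obstacle flagged is not an obstacle overcome; as written, both the existence of $S_8$ with boundary $3\vec\gamma_8$ and statement~(3) remain unproved.
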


Comme on le rappellera, une section de Birkhoff est une surface à bord transverse au flot et qui coupe toutes les orbites. 
L'existence d'une telle section implique que son bord est un n\oe ud fibré, et donc que le complémentaire du bord une variété qui fibre sur le cercle. 
Dans le cas présent, les fibres sont de genre~1, ce qui peut se voir sur la fa\c con dont le flot s'enroule autour du bord.  
Le calcul de la monodromie se fait en suivant l'application de premier retour le long du flot, qui est ici un difféomorphisme d'Anosov du tore, et donc donné par une classe de conjugaison dans~$\SLZ$. 
Déterminer une telle classe de conjugaison n'est pas forcément facile, mais ici la tache est simplifiée par le fait qu'ici on calcule uniquement la trace (en comptant les points fixes de l'application de premier retour) et qu'il n'y a qu'une classe de conjugaison de trace~$3$ dans~$\SLZ$, à savoir celle de~$\cat$.

Le point-clé qu'on va expliquer est que pour des flots d'Anosov, l'existence d'une section de Birkhoff à bord fixé, le calcul de son genre, et le nombre de points fixes de l'application de premier retour sont trois informations qui toutes se lisent dans des calculs d'enlacement entre orbites du flot considéré. 
En effet, les variétés~$\U\Sdts$ et~$\U\Sttq$ sont des sphères d'homologie rationnelles, et on peut donc y définir des notions d'enlacement (rationnel) entre n\oe uds disjoints, notées~$\Enl_{\U\Sdts}$ et~$\Enl_{\U\Sttq}$. 
En utilisant la direction stable du flot d'Anosov, on peut pousser toute orbite~$k$ sur un n\oe ud proche~$k^{+s}$ et obtenir une notion d'auto-enlacement.  
Les propositions~\ref{P:h} et~\ref{P:3huit} sont alors point à point conséquences des \'enonc\'es suivants. 

\begin{introprop}\label{P:hEnl}
\begin{enumerate}
\item Pour toute orbite périodique~$k$ de~$\fdts$, on a $\Enl_{\Sdts}(\vec h, k)<0$.
\item On a $\Enl_{\Sdts}(\vec h, \vec h^{+s}) = -1$.
\item Pour toute orbite périodique~$k$ de~$\fdts$ diff\'erente de~$\vec h$, on a $\Enl_{\Sdts}(\vec h, k)\le -2$.
\end{enumerate}
\end{introprop}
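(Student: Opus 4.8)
The plan is to derive all three statements from a single formula computing $\Enl_{\Sdts}(\vec h, k)$ by lifting the geodesic flow to the universal cover. Writing $\U\Sdts \cong \Gamma\backslash\mathrm{PSL}_2(\R)$, where $\Gamma$ is the $(2,3,7)$ triangle group acting freely on $\mathrm{PSL}_2(\R)$, a periodic orbit $k$ over a closed geodesic $c$ lifts to a $\Gamma$-orbit of oriented geodesics of $\Hy$, each determined by its pair of endpoints on $\partial\Hy \cong \Sph^1$. Following the strategy of Ghys, the rational linking number $\Enl_{\Sdts}(\vec h, k)$ can then be written as a sum, over representatives of these lifts modulo the cyclic subgroup generated by $h$, of the linking of endpoint pairs on the circle, corrected by the winding of the fibre direction around the three cone points. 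The decisive feature I would establish is that, once $\vec h$ is oriented as the lift of the height through the order-$2$ point, every term of this sum carries the same sign; this is the orbifold incarnation of the left-handedness of the geodesic flow.

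For part (1), I would check that each term of the formula is $\le 0$ and that at least one is strictly negative for every $k$. Geometrically, each transverse intersection of $c$ with $h$ on $\Sdts$, as well as each passage of $c$ near the order-$2$ cone point, contributes a negative amount; since no closed geodesic can avoid both meeting $h$ transversally and winding in the fibre direction, the total is strictly negative, giving $\Enl_{\Sdts}(\vec h, k)<0$.

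Parts (2) and (3) are the quantitative refinements. For the self-linking in part (2), I would compute directly with the stable foliation $\Fs$: pushing $\vec h$ off itself along $\Fs$ produces $\vec h^{+s}$, and $\Enl_{\Sdts}(\vec h, \vec h^{+s})$ records the framing of $\Fs$ along $\vec h$. Because $h$ is primitive and passes exactly once through the order-$2$ cone point, where the cone angle $\pi$ forces a half-turn of the transverse plane, the stable direction makes a single half-rotation relative to the Seifert framing, which I expect to evaluate to $-1$. For part (3), the crucial point is that $\vec h$ is the \emph{unique} orbit realising the value $-1$: any $k\ne\vec h$ either covers $h$ at least twice, contributing $\le -2$ by multiplicativity, or is carried by a geodesic $c\ne h$, which must cross $h$ or wind around the order-$2$ point enough to contribute at least $2$ in absolute value. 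Matching these cases against the formula yields $\Enl_{\Sdts}(\vec h,k)\le -2$.

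The main obstacle is establishing the linking formula of the first paragraph with the correct normalisation, since $\U\Sdts$ is only a rational homology sphere and the cone points of orders $2,3,7$ contribute fractional amounts to the rational linking number. Getting these fractions exactly right is what pins down the value $-1$ in part (2), rather than some other half-integer, and it is also what produces the sharp gap between $-1$ and $-2$ in part (3). Controlling this gap requires knowing that $h$, being the shortest geodesic and invariant under the order-$2$ involution, is genuinely the unique orbit at distance $1$ from $\vec h$; it is in this estimate that the minimality and symmetry of $h$ enter in an essential way.
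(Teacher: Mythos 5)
Your strategy for point (1) --- writing $\Enl_{\Sdts}(\vec h,k)$ as a sum of terms of constant sign indexed by lifts of the geodesics to $\partial\Hy$, à la Ghys --- is viable; it is essentially the left-handedness theorem of~\cite{Left}, which is exactly what the paper invokes for point (1). But your proposal does not actually prove points (2) and (3), and that is where the content of the proposition lies. For (2), ``which I expect to evaluate to $-1$'' is a heuristic, not a computation: the half-turn of the transverse plane at the order-$2$ cone point only says the stable framing differs from some reference framing by a half-twist; pinning the value to $-1$ requires comparing against the canonical (Seifert) longitude of $\vec h$ inside the surgered manifold $\U\Sdts$, i.e.\ an explicit computation in the surgery presentation, which the proposal never performs. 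For (3), the argument is circular: the claim that any geodesic $c\neq h$ ``must cross $h$ or wind around the order-$2$ point enough to contribute at least $2$ in absolute value'' is precisely the statement to be proven, and your last paragraph concedes that the needed uniqueness estimate is assumed rather than derived. Moreover the alternative ``$k$ covers $h$ at least twice'' is vacuous: a periodic orbit of the flow running along $h$ \emph{is} the orbit $\vec h$; distinct periodic orbits of a flow never cover one another (multiple covers arise in the paper only as redundant symbolic codes, not as distinct orbits).

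The missing idea, which the paper supplies, is a mechanism making $\Enl_{\Sdts}(\vec h,\cdot)$ effectively additive and hence computable with a gap. All periodic orbits of $\fdts$ are carried by the two-ribbon template $\mathcal{T}_{2,3,7}$ of~\cite{DPinski}, coded by words in $x=ab$, $y=abb$; the surgery formula of~\cite{Left} (lemme~\ref{L:Enlacement}) expresses $\Enl_{\U\Sdts}(k_1,k_2)$ as the template linking in $\Sph^3$ plus an explicit bilinear form in the letter counts; and, crucially, any orbit $k_2$ whose code contains at least two syllables $x^*y^*$ is homotopic, in the complement of $\vec h$, to a connected sum of two shorter template orbits, so that $\Enl_{\U\Sdts}(\vec h,\cdot)$ is additive under concatenation of codes. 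Everything then reduces to four explicit evaluations: the codes $(xy)^\Z$, $(x^2y)^\Z$, $(xy^2)^\Z$ (all three representing $\vec h$) give $-1$, and $(x^2y^2)^\Z$ gives $-2$. Additivity then yields (1), the sharp value in (2) (after observing that the stable framing is isotopic to the template framing), and the gap $\le -2$ in (3) simultaneously, since any orbit other than $\vec h$ decomposes into at least two summands each contributing at most $-1$. Without such a decomposition device --- or some substitute quantitative input --- your outline cannot close the gap between ``negative'' and ``$\le -2$''.
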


\begin{introprop}\label{P:3huitEnl}
\begin{enumerate}
\item Pour toute orbite périodiquee~$k$ de~$\fttq$, on a $\Enl_{\Sttq}(\vec\gamma_8, k)<0$.
\item On a $\Enl_{\Sttq}^{s}(\vec\gamma_8, \vec\gamma_8^{+s}) = -1/3$.
\item Pour toute orbite périodique~$k$ de~$\fttq$ diff\'erente de~$\vec\gamma_8$, on a $\Enl_{\Sttq}(\vec\gamma_8, k)\le -2/3$.
\end{enumerate}
\end{introprop}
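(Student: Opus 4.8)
The plan is to work in the homogeneous model $\U\Sttq\cong\Gamma\backslash\mathrm{PSL}_2(\R)$, where $\Gamma$ is the $(3,3,4)$ triangle group, so that $\fttq$ is right translation by the diagonal one-parameter subgroup and its periodic orbits are in bijection with conjugacy classes of primitive hyperbolic elements of $\Gamma$, equivalently with primitive oriented closed geodesics of $\Sttq$. Since $\U\Sttq$ is a rational homology sphere, every knot is rationally null-homologous and $\Enl_{\Sttq}$ is well defined with values in $\Q$. The first step is to compute $H_1(\U\Sttq;\Z)$ from the Seifert fibration and to locate the class of $\vec\gamma_8$: because $\gamma_8$ encircles the two cone points of order $3$, I expect $[\vec\gamma_8]$ to have order exactly $3$. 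This is the source of the denominators $3$ in (2) and (3), and it explains why only the triple $3\vec\gamma_8$ bounds an honest surface, namely the section $S_8$ of Proposition~\ref{P:3huit}. It also gives a clean constraint: since $3\vec\gamma_8$ is null-homologous, $3\,\Enl_{\Sttq}(\vec\gamma_8,k)\in\Z$ for every orbit $k$, so all these linking numbers lie in $\tfrac13\Z$.

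For item (1), I would invoke the left-handedness of the geodesic flow on a hyperbolic $2$-orbifold, in the spirit of Ghys' computation on the modular surface and its orbifold extension: the linking number of any two invariant measures of $\fttq$ is negative. Concretely, lifting to $\Hy$ one reads $\Enl_{\Sttq}(\vec\gamma_8,k)$ off a signed count of how the $\Gamma$-translates of the axis of $k$ separate the endpoints of the axis of $\gamma_8$ on $\partial\Hy$, and the hyperbolicity of the metric makes every such contribution have the same (negative) sign. This yields $\Enl_{\Sttq}(\vec\gamma_8,k)<0$ for all periodic orbits $k$, and combined with the first paragraph it already forces $\Enl_{\Sttq}(\vec\gamma_8,k)\le -1/3$.

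Items (2) and (3) require the exact values and are where the real work lies. For (2), the stable push-off $\vec\gamma_8^{+s}$ is the parallel obtained by flowing $\vec\gamma_8$ along the weak-stable foliation $\Fs$, and $\Enl_{\Sttq}^{s}(\vec\gamma_8,\vec\gamma_8^{+s})$ measures the twisting of the stable horocyclic framing relative to the rational Seifert framing of the order-$3$ class. I would compute it either (a) directly, by a Gauss--Bonnet type count of the rotation of the stable frame along $\gamma_8$, the contribution localising at the self-intersection of the figure-eight and at the two enclosed cone points, or (b) by passing to a finite manifold cover of $\Sttq$ on which the class becomes null-homologous, computing an honest integral self-linking there and dividing by the degree; both routes should give $-1/3$. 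For (3) one must show that no periodic orbit other than $\vec\gamma_8$ realises the extremal value $-1/3$, i.e. that every genuinely distinct geodesic picks up at least one further unit, $-2/3$, of negative linking; the idea is that an orbit attaining $-1/3$ would have to be ``parallel'' to $\gamma_8$ in the strongest sense, and the at-infinity separation count shows $\gamma_8$ is the only such orbit.

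The main obstacle is item (3): promoting the strict negativity of (1) into a \emph{uniform} gap away from $-1/3$ over the infinitely many periodic orbits. I expect this to demand an explicit description of the orbits with smallest $|\Enl_{\Sttq}(\vec\gamma_8,\cdot)|$, most cleanly through the at-infinity separation count, or equivalently through the combinatorics of a template carrying all periodic orbits of $\fttq$, used together with the torsion computation of the first paragraph to control the fractional part. A secondary delicate point is the framing bookkeeping in (2): the factor $3$ must be inserted consistently so that the genuine rational self-linking, rather than an integer multiple of it, emerges as $-1/3$.
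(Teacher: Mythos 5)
Your framework is sound and matches the paper's starting point: $\U\Sttq$ is a rational homology sphere with $\H_1(\U\Sttq;\Z)=\Z/3\Z$, the class $[\vec\gamma_8]$ has order exactly $3$ (the paper deduces this from the value $-1/3$ rather than computing it a priori, but the conclusion is the same), hence all linking numbers with $\vec\gamma_8$ lie in $\tfrac13\Z$, and item (1) can be quoted from the left-handedness result of~\cite{Left} --- which is exactly what the paper does. Combined, these observations indeed give $\Enl_{\Sttq}(\vec\gamma_8,k)\le-1/3$ for every orbit $k$.

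However, there is a genuine gap at items (2) and (3), and you identify it yourself without closing it. For (2) you propose two routes (a rotation count of the stable framing, or a finite cover making the class null-homologous) but carry out neither; for (3) you state that a uniform gap below $-1/3$ ``should'' follow from an explicit description of the orbits of smallest linking, which is precisely the missing content. The paper's proof supplies exactly this mechanism, and nothing in your proposal replaces it: all periodic orbits of $\fttq$ are carried by the template $\mathcal{T}_{3,3,4}$ embedded in $\Sph^3$ (with $\U\Sttq$ obtained by surgery of slopes $2,2,3$ on the Hopf link \Hopf), coded by words in $a,b$ with explicit admissibility constraints; the surgery formula of Lemma~\ref{L:Enlacement}, with the explicit positive form $Q_{3,3,4}$, converts linking in $\U\Sttq$ into linking in $\Sph^3$ plus a bilinear term in the letter counts; and, crucially, $\Enl(\vec\gamma_8,\cdot)$ is nearly additive under concatenation of codes: any orbit whose code has at least two syllables is homotopic, in the complement of $\vec\gamma_8$, to a connected sum of two shorter template orbits, so its linking number is the sum of theirs. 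This additivity is what reduces the infinitely many orbits to a finite list of short codes (among which $(ab)^\Z$ and $(a^2b^2)^\Z$ are the two lifts of $\gamma_8$), yields the exact value $-1/3$ in (2) via the identification of the stable framing with the template framing, and shows that $\vec\gamma_8$ is the \emph{only} orbit attaining $-1/3$, giving the bound $-2/3$ in (3). Note also that your heuristic for (1) --- that hyperbolicity forces all contributions of a separation count at infinity to carry the same sign --- is not a proof; the negativity of all pairwise linking numbers is itself the non-trivial theorem of~\cite{Left}, established by the same template computations, so even item (1) ultimately rests on the machinery your proposal leaves unexecuted.
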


Enfin, pour démontrer ces deux dernières propositions, on utilisera des outils précédemment développés~\cite{DPinski, Left} qui permettent, à l'aide d'objets appelés patrons, le calcul effectif des enlacements entre orbites périodiques quelconques des flots de type~$(\phi_{p,q,r}^t)_{t\in\R}$.

\medskip
Cette note se veut accessible au sens où on rappelle la plupart des éléments basiques. 
Elle se veut aussi courte et sera elliptique sur certains calculs. 

\medskip
\noindent\textbf{Remerciements.} 
Merci à Pierre Will avait deviné la partie (2) du théorème~\ref{Th} et d'être venu m'en parler; cette note est l'occasion de présenter des méthodes qui me sont chères.


\section{Orbisurfaces, fibrés unitaires tangents, et flots géodésiques}

On donne dans cette partie les définitions nécessaires à la compréhension de l'énoncé du théorème~\ref{Th}.

\subsection{Orbisurfaces et fibrés unitaires tangents}\label{S:Orbisurfaces}

De fa\c con générale, une orbivariété est un espace localement modelé, non sur~$\R^n$ comme les variétés standards, mais sur des quotients de~$\R^n$ par des groupes linéaires finis. 
Ici on s'intéresse à des orbivariétés particulières qui d'une part sont de dimension 2 et d'autre part sont ``bonnes'', au sens qu'elles admettent un revêtement simplement connexe. 

\begin{defi}
Une \term{orbisurface hyperbolique} est un espace métrique qui s'identifie au quotient du plan~$\Hy$ par un groupe fuchsien~$\Gamma$. 
Lorsque $\Gamma$ préserve l'orientation, l'orbisurface est dite orientable. 
\end{defi}

Comme les groupes fuchsiens admettent des domaines fondamentaux polygonaux dans~$\Hy$, on peut aussi voir une orbisurface hyperbolique comme un polygone hyperbolique dont on a apparié les côtés. 
Notons que, par définition des groupes fuchsiens, le stabilisateur de chaque point de~$\Hy$ est un groupe fini, et donc chaque point d'une orbisurface hyperbolique orientable admet un voisinage localement isométrique au quotient d'un disque hyperbolique par un groupe de rotation fini. 
En notant $\Gamma_k$ le groupe des rotations d'un disque d'angle multiple de~$\frac{2\pi}k$, 
on appelle alors \term{point conique} un point 

\noindent\begin{minipage}[t]{0.5\textwidth}
admettant un voisinage localement isométrique \`a~$\Hy/\Gamma_k$ avec $k\ge 2$. 
Les cas qui nous intéressent sont ceux présentés dans l'introduction, où $\Gamma$ est un groupe triangulaire, c'est-à-dire le groupe~$\Gamma_{p,q,r}$ des isométries de~$\Hy$ préservant l'orientation et un pavage par des triangles d'angles~$\frac{\pi}p, \frac{\pi}q, \frac{\pi}r$. 
Dans ce cas, le quotient, not\'e~$\Spqr$, est homéomorphe \`a une sphère, et métriquement il a trois points coniques d'angle totaux~$\frac{2\pi}p, \frac{2\pi}q, \frac{2\pi}r$. 
\end{minipage}
\begin{minipage}[t]{0.5\textwidth}
	\begin{picture}(0,0)(0,0)
	\put(5,-32){\includegraphics[width=.42\textwidth]{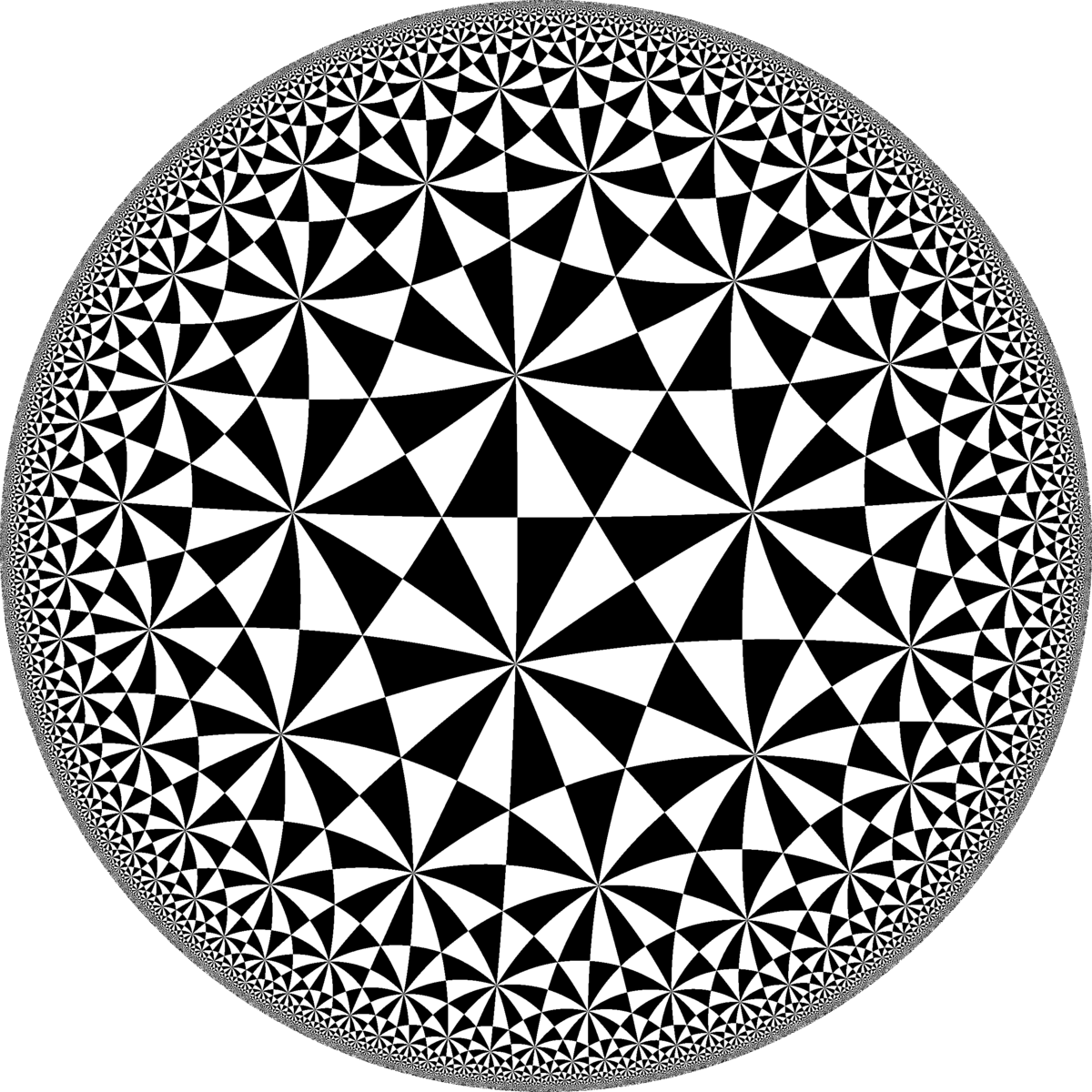}}
	\put(50,-32){\includegraphics[width=.42\textwidth]{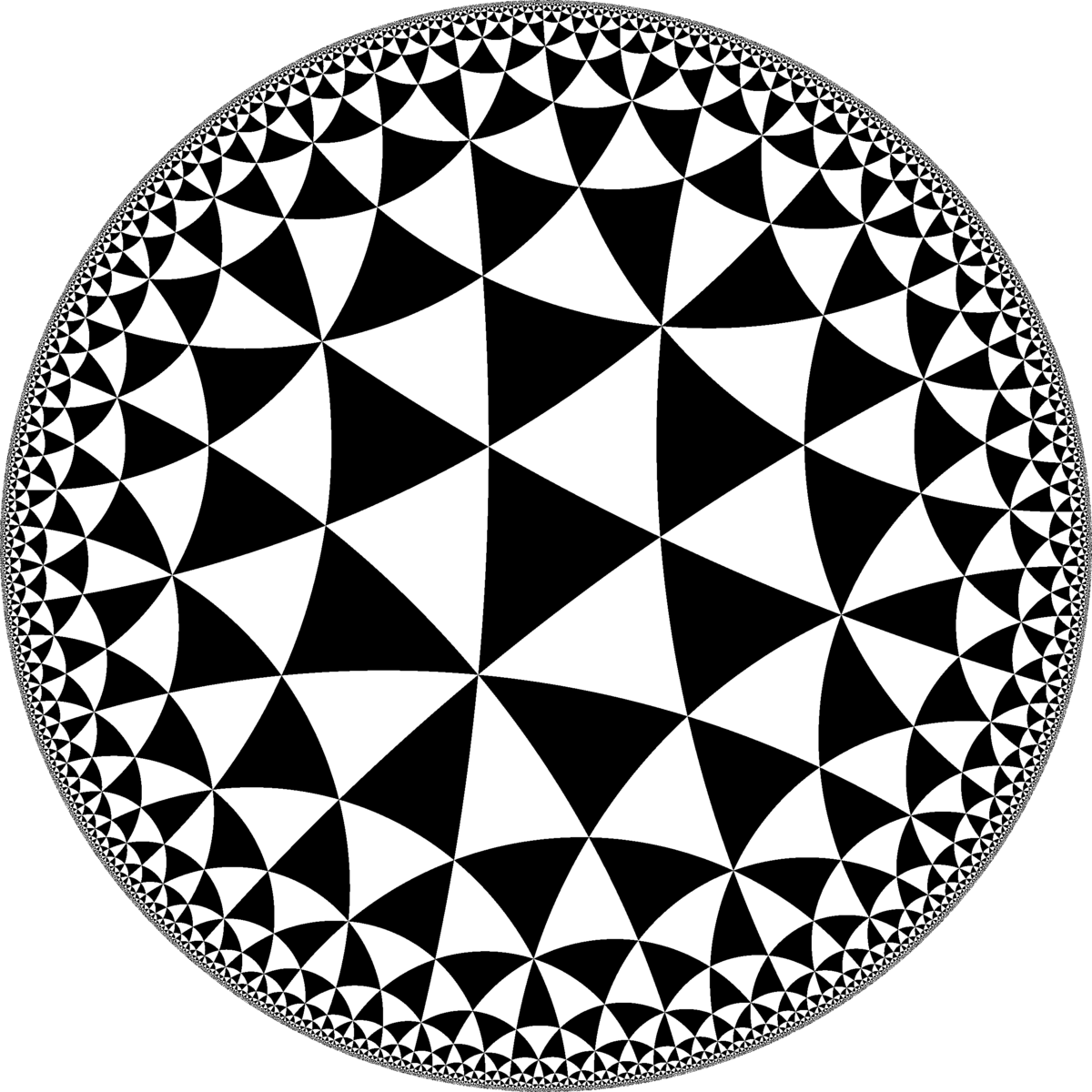}}
	\end{picture}
\end{minipage}

\medskip
Le fibré unitaire tangent d'une orbivariété peut être défini localement. 
On va plutôt utiliser une définition globale, plus simple, mais adaptée uniquement au cas des bonnes orbivariétés. 
Le fibré unitaire tangent~$\U\Hy$ est le fibré en cercle sur~$\Hy$ consitué des vecteurs tangents \`a~$\Hy$ et de norme~$1$. 
Topologiquement, il est homéomorphe au tore plein~$\Hy\times\Sph^1$. 
Si $\Gamma$ est un groupe fuchsien, il agit sur~$\Hy$ par isométries, et donc l'action s'étend \`a~$\U\Hy$. 

\begin{defi}
\'Etant donné une orbisurface hyperbolique $\Sigma=\Hy/\Gamma$, son \term{fibré unitaire tangent}, noté~$\U\Sigma$, est la variété tridimensionnelle~$\U\Hy/\Gamma$.
\end{defi}

Le fait que $\U\Sigma$ est une variété (c'est-à-dire avec des cartes modelées sur~$\R^3$ et non sur des quotients finis de~$\R^3$) même lorsque $\Sigma=\Hy/\Gamma$ n'en est pas une s'explique ainsi : une orbisurface hyperbolique orientable n'est pas une variété à cause des points coniques, lesquels proviennent des points de~$\Hy$ ayant un stabilisateur non trivial. 
Comme ce stabilisateur est un groupe fini de rotation, son action sur~$\U\Hy$ bouge les vecteurs tangents par rotation, donc elle

\noindent
\begin{minipage}[t]{0.7\textwidth}
est sans point fixe, ce qui signifie que~$\Gamma$ agit sur~$\U\Hy$ sans point fixe, et donc le quotient~$\U\Hy/\Gamma$ est bien une variété de dimension~3. 
Plus précisément, si~$U_p$ est un disque autour d'un point conique d'angle~$\frac{2\pi}p$, alors~$U_p$ s'identifie \`a $\D^2/\Gamma_p$.
Dans ce contexte $\Gamma_p$ agit sur le tore plein~$\U\D^2$ (représenté à gauche sur la figure ci-contre) par vissage comme suit~: $\bar k.(re^{i\theta},e^{i\phi})=(re^{i(\theta+\frac{2k\pi}p)}, e^{i(\phi+\frac{2k\pi}p)})$. 
Le quotient~$\U\D^2/\Gamma_p$ (représenté à droite) est aussi un tore plein, mais le feuilletage par fibre devient un feuilletage de Seifert du type $(p,1)$. 
\end{minipage}
\begin{minipage}[t]{0.3\textwidth}
	\begin{picture}(0,0)(0,0)
	\put(5,-32){\includegraphics[width=.84\textwidth]{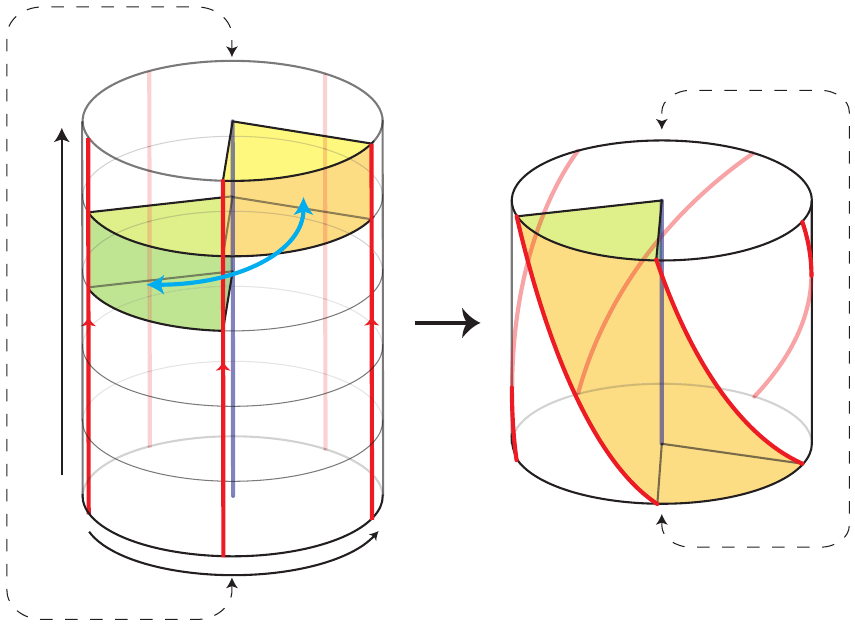}}
	\end{picture}
\end{minipage}

\subsection{Flots géodésiques}
\'Etant donn\'e métrique riemannienne sur une surface~$\Sigma$, le flot géodésique~$\fgeod$ sur~$\U\Sigma$ peut \^etre défini via l'équation~$\phi_\Sigma^t(\gamma(0), \dot\gamma(0))=(\gamma(t),\dot\gamma(t))$, pour toute géodésique~$\gamma$ parcourue à vitesse~$1$. 
Pour étendre la 

\noindent
\begin{minipage}[t]{0.7\textwidth}
définition aux orbisurfaces, il faut d'abord étendre la notion de géodésique. 
Dans notre cas, on peut simplifier en utilisant les géodésiques de~$\Hy$. 
En effet un groupe fuchsien agissant par isométrie, son action envoie les géodésiques de~$\Hy$ parcourues à vitesse 1 sur d'autres géodésiques parcourues à vitesse~$1$. 

\begin{defi}
Soit~$\Sigma=\Hy/\Gamma$ une orbisurface hyperbolique, le \term{flot géodésique sur~$\U\Sigma$} est le flot~$(\phi_{\Sigma}^t)_{t\in\R}$ sur~$\U\Sigma=\U\Hy/\Gamma$ obtenu par passage au quotient du flot géodésique sur~$\U\Hy$.
\end{defi}

\hspace{4mm}En particulier, les orbites périodiques de~$(\phi_{\Sigma}^t)_{t\in\R}$ sont les relevés des géodésiques périodiques (orientées) sur~$\Sigma$, lesquelles correspondent d'ailleurs aux classes de conjugaison dans~$\Gamma$.
\end{minipage}
\begin{minipage}[t]{0.3\textwidth}
	\begin{picture}(0,0)(0,0)
	\put(5,-45){\includegraphics[width=.84\textwidth]{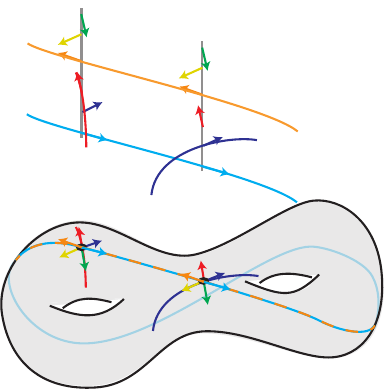}}
	\end{picture}
\end{minipage}

Par commodité, on note~$\fdts$ et~$\fttq$ les flots géodésiques sur~$\U\Sdts$ et~$\U\Sttq$ respectivement. 

\subsection{Flots d'Anosov}
La définition classique de flot d'Anosov tient en l'existence d'une décomposition invariante du fibré tangent \`a la variété en sommes de trois fibrés, l'un tangent au flot, le second exponentiellement contracté par la différentielle en temps positif, et le troisième exponentiellement contracté en temps négatif. 

On va donner une autre définition -- topologique -- dont on sait à présent~\cite{Shannon} qu'elle est équivalente à la définition classique pour les flots transitifs en dimension 3, mais qui a l'avantage d'être plus flexible vis-à-vis des opérations de chirurgie dont on va parler. 

\medskip
\noindent\begin{minipage}[t]{0.7\textwidth}
\begin{defi}
Un flot~$(\phi^t)_{t\in\R}$ sur une variété compacte sans bord~$M$ de dimension~3 est dit \term{topologiquement d'Anosov} s'il existe deux feuilletages bidimensionnels~$\F^s$, $\F^u$ de~$M$ qui sont invariants par~$(\phi^t)_{t\in\R}$, dont les feuilles sont transverses et se coupent le long des orbites de~$(\phi^t)_{t\in\R}$, tels que $\F^s$ est exponentiellement contracté par~$(\phi^t)_{t\in\R}$ pour $t\to+\infty$, et que $\F^u$ est exponentiellement contracté par~$(\phi^t)_{t\in\R}$ pour~$t\to-\infty$. 
Les feuilletages~$\F^s$ et $\F^u$ sont appelés les \term{feuilletages stable faible} et \term{instable faible} respectivement. 
\end{defi}
\end{minipage}
\begin{minipage}[t]{0.3\textwidth}
	\begin{picture}(0,0)(0,0)
	\put(2,-30){\includegraphics[width=.98\textwidth]{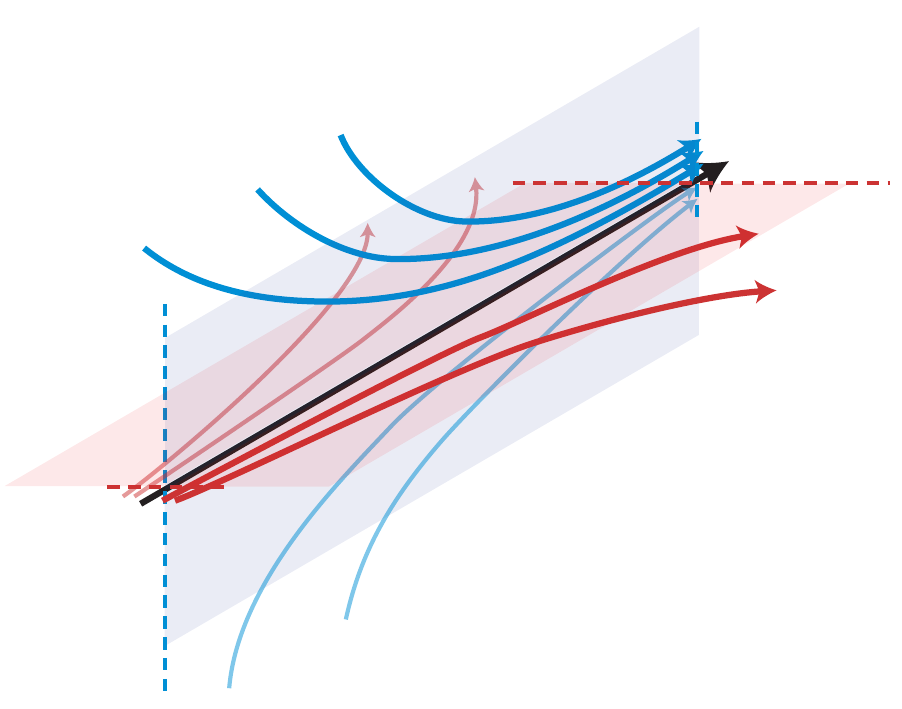}}
	\end{picture}
\end{minipage}

\medskip

On peut noter qu'être topologiquement d'Anosov est une propriété du feuilletage orienté de dimension~1 défini par les lignes de flot plus que du champ de vecteurs engendrant le flot.

\smallskip
\noindent\begin{minipage}[t]{0.7\textwidth}
\hspace{3mm}
Les flots géodésiques sur les orbisurfaces hyperboliques sont historiquement les premiers exemples de flots (topologiquement) d'Anosov~\cite{Anosov}. 
La feuille stable faible contenant les vecteurs tangents à une géodésique consiste en l'ensemble des vecteurs tangents à des géodésiques ayant la même extrémité positive dans~$\partial\Hy$, tandis que la feuille instable faible consiste en l'ensemble des vecteurs tangents à des géodésiques ayant la même extrémité négative. 
\end{minipage}
\begin{minipage}[t]{0.3\textwidth}
	\begin{picture}(0,0)(0,0)
	\put(10,-28){\includegraphics[width=.68\textwidth]{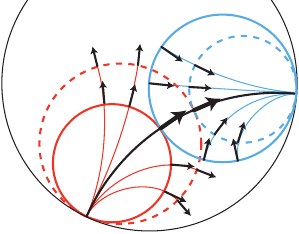}}
	\end{picture}
\end{minipage}


\section{Chirurgies et sections de Birkhoff}

On donne dans cette partie les définitions nécessaires à la compréhension des propositions~\ref{P:h} et~\ref{P:3huit}, ainsi que la réduction du théorème~\ref{Th} à celles-ci. 

\subsection{\'Eclatement et chirurgie}\label{S:Eclatement}

On rappelle la convention standard~\cite{Rolfsen} : 
pour $M$ est une vari\'et\'e de dimension 3 et~$k\subset M$ un n\oe ud, on consid\`ere le \term{fibré unitaire normal} $\nu^1(k) := (\mathrm{T}M|_k/\mathrm{T}k)/\R^*_+$ : au-dessus de chaque point de $k$ il y a un cercle dont les points repr\'esentent les demi-plans bordés par la tangente \`a~$k$ en ce point.
Le fibr\'e~$\nu^1(k)$ est un tore bidimensionnel, avec des \term{méridiens} (orientés) canoniques $m$ donn\'es par les fibres (orientées) des points de~$k$. 
On appelle \term{longitude} de~$k$ une courbe~$\ell$ sur~$\nu^1(k)$ coupant chaque méridien une fois positivement, de sorte que $(m, \ell)$ forme une base  de~$\H_1(\nu^1(k))$. 
Celle-ci est directe quand on regarde de l'extérieur, ce qui correspond au fait qu'on veut que le m\'eridien, pouss\'e \`a l'ext\'erieur du n\oe ud, ait un enlacement~$+1$ avec le n\oe ud. 
Les classes d'homologie des autres longitudes de~$k$ sont alors les $[\ell+jm], j\in\Z$, c'est-\`a-dire les courbes de coordonn\'ees $(1, j)$. 

Remarquons que si $k$ est nul en homologie entière (en particulier si $M$ est une sph\`ere d'homologie entière), alors il y a une longitude canonique $\ell_{\mathrm{Seifert}}$ donn\'ee par le bord d'une surface de Seifert lisse de bord~$k$.

\'Etant donné une courbe simple sur~$\nu^1(k)$ de coordonn\'ees $(b,a)$, autrement dit homologue \`a $bm+a\ell$, sa \term{pente} est le rationnel~$b/a$. 
Ainsi, une pente $\infty$ correspond au m\'eridien~$m$ et une pente $0$ \`a la longitude~$\ell$ de départ. 
Les pentes entières correspondent alors aux courbes coupant une fois le m\'eridien, c'est-à-dire aux autres longitudes. 

Ensuite on consid\`ere la variété à bord $\overline{M\setminus k} := (M\setminus k) \cup \nu^1(k)$, appelée \term{\'eclatement normal de~$k$ dans~$M$}. 
Elle est hom\'eomorphe \`a $M\setminus U(k)$ o\`u $U(k)$ est un voisinage tubulaire de~$k$, qui est traditionnellement appelée un {forage de Dehn de~$M$ sur~$k$} . 
L'avantage de la pr\'esentation pr\'ec\'edente est qu'elle s'adapte mieux aux flots puisqu'on n'a retir\'e que $k$ et non tout un ouvert. 

Enfin, étant donné une pente~$b/a$, on choisit une famille de courbes de pente~$b/a$ qui feuill\`etent~$\nu^1(k)$. 
La \term{chirurgie de Dehn de pente $b/a$} consiste à coller un tore plein \`a $\overline{M\setminus k}$, de sorte que les méridiens du nouveau tore plein soient collés à la famille de courbes de pente $b/a$. 
On note $M(k, b/a)$ le r\'esultat. 

La définition ci-dessus s'étend sans peine si on remplace~$k$ par un entrelacs \`a un nombre arbitraire de composantes, pourvu qu'on précise une longitude et une pente pour chaque composante. 

\smallskip
On donne maintenant une présentation de la variété~$\U\Spqr$ par chirurgie.

On appelle pantalon une sphère à laquelle on a retiré trois disques; on note~$\P$ un tel pantalon. 
Pour $p,q,r$ des entiers au moins égaux à~2, les orbisurfaces~$\Sigma_{p,q,r}$ peuvent être topologiquement obtenues à partir d'un pantalon auquel on recolle, le long de ces trois composantes de bord, des disques quotientés~$\D^2/\Gamma_p, \D^2/\Gamma_q, \D^2/\Gamma_r$. 
La phrase précédente n'est pas rigoureuse, puisque topologiquement on a de toutes fa\c cons affaire à des sphères. 

Par contre, quand on passe au fibré unitaire tangent, le contenu n'est pas vide~: on peut obtenir le fibré unitaire tangent~$\U\Spqr$ à partir du fibré unitaire tangent d'un pantalon~$\U\P$, en recollant les fibrés unitaires tangents~$\U\D^2/\Gamma_p$, $\U\D^2/\Gamma_q$ et~$\U\D^2/\Gamma_r$. 
Comme on l'a expliqué en~\ref{S:Orbisurfaces}, ces derniers sont des tores, et pour déterminer un tel recollement il faut alors préciser sur quelle courbe on colle le méridien de ces tores. 
Notons que, le pantalon étant une surface à bord, son fibré unitaire tangent~$\U\P$ est topologiquement le fibré trivial~$\P\times\Sph^1$. 
Ce dernier peut être vu (de fa\c con à faire jouer un r\^ole symétrique aux trois bords) comme le complément d'un entrelacs de Hopf à trois composantes dans~$\Sph^3$, not\'e~\Hopf.
Par conséquent, le fibré unitaire tangent~$\U\Sigma_{p,q,r}$ peut être obtenu par chirurgies sur~\Hopf, avec des coefficients à déterminer en fonction de~$p,q,r$. 

\begin{theo}\cite{DPinski}\label{T:Chirurgie}
Pour $p, q, r$ des entiers supérieurs aux égaux à 2, le fibré unitaire tangent~$\U\Sigma_{p,q,r}$ est obtenu à partir de~$\Sph^3$ par chirurgies de pentes~$p-1, q-1, r-1$ sur les trois composantes d'un entrelacs de Hopf~\Hopf. 
\end{theo}

Notons que cette présentation permet aussi de calculer le groupe~$\H_1(\U\Sigma_{p,q,r}; \Q)$ suivant la procédure décrite par Saveliev~\cite[pp 3-4]{Saveliev}~: il s'agit du co-noyau de la matrice~$\begin{pmatrix} p&0&0&1\\0&q&0&1\\0&0&r&1\\1&1&1&-1\end{pmatrix}$. 
En particulier il s'agit d'un groupe abélien fini d'ordre~$pqr-pq-qr-pr$, donc $\U\Sigma_{p,q,r}$ est toujours une sphère d'homologie rationnelle. 
Ainsi on voit que $\U\Sdts$ est une sphère d'homologie entière, tandis que le groupe $\H_1(\U\Sttq; \Q)$ est d'ordre~$3$. 

\subsection{Flots et chirurgies}

Si $M$ est équipée d'un flot~$(\phi^t)_{t\in\R}$ de classe~$C^1$, il induit un feuilletage orient\'e de dimension~1. 
\'Etant donné une collection~$k$ d'orbites périodiques, le flot est évidemment bien d\'efini sur $M\setminus k$; 
via sa différentielle, il s'\'etend \`a $\nu^1(k)$, et donc \`a l'éclaté normal~$\overline{M\setminus k}$. 
Sur le tore~$\nu^1(k)$ qui est le bord de~$\overline{M\setminus k}$, le flot induit un feuilletage transverse aux cercles méridiens que sont les fibres des points de~$k$. 
Ainsi le flot étendu a un nombre de translation asymptotique qui correspond au rapport asymptotique entre le nombre de longitudes et le nombre de méridiens coupés par une orbite. 
Pour toutes les pentes sauf au plus une (qui correspond \`a ce nombre de translation asymptotique du feuilletage \'etendu \`a $\nu^1(k)$), le feuilletage sur $\nu^1(k)$ est transverse \`a une famille de m\'eridiens de pente~$b/a$, et donc un feuilletage de dimension 1 existe toujours apr\`es chirurgie. 
On peut alors trouver un flot $(\psi^t)_{t\in\R}$ engendrant ce feuilletage, et m\^eme on peut s'assurer que~$(\phi^t)_{t\in\R}$ et~$(\psi^t)_{t\in\R}$ ont les mêmes arcs d'orbites en dehors d'un voisinage tubulaire de~$k$. 

L'observation de Fried est la suivante~\cite{FriedAnosov}~: si un flot~$(\phi^t)_{t\in\R}$ est transitif et topologiquement d'Anosov et que $k$ est une orbite périodique (dont les feuilles stables et instables sont orientables), alors le flot étendu \`a~$\nu^1(k)$ a quatre\footnote{ou deux si les feuilles stables/instables ne sont pas orientables. Ce n'est pas un problème de traiter ce cas, mais on ne le rencontre pas ici avec les flots géodésiques.} orbites périodiques, dont deux correspondent au prolongement de la feuille stable de~$k$ et deux au prolongement de la feuille instable. 
La dynamique induite sur~$\nu^1(k)$ est alors de type hyperbolique,

\noindent\begin{minipage}[t]{0.6\textwidth}
\noindent
avec des orbites qui vont des deux orbites instables aux deux orbites stables.
Il y a alors une longitude~$\ell^s$ canonique donnée par la direction induite sur~$\nu^1(k)$ par les feuilles stables. 
Si $b/a$ est de la forme $1/n$, alors il y a une famille de courbes qui coupent chaque orbite stable (ou instable) exactement une fois. 
Quand on recontracte~$k$ le long de ces nouveaux méridiens, on récupère deux feuilletages ayant la bonne forme pour être topologiquement Anosov. 
La chirurgie de Dehn donne alors un flot $(\psi^t)_{t\in\R}$ qui est aussi topologiquement d'Anosov. 
\end{minipage}
\begin{minipage}[t]{0.4\textwidth}
	\begin{picture}(0,0)(0,0)
	\put(5,-33){\includegraphics[width=.9\textwidth]{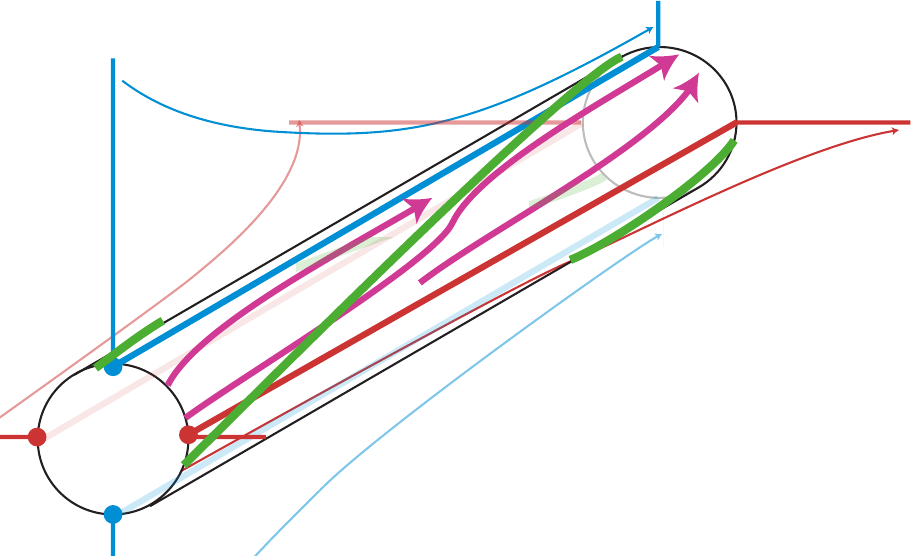}}
	\end{picture}
\end{minipage}

\vspace{1mm}
Que le flot obtenu soit effectivement d'Anosov, c'est-à-dire qu'il existe une structure différentiable pour lequel le flot est d'Anosov selon la définition classique est un fait absolument pas trivial, démontré récemment par Mario Shannon~\cite{Shannon}.

\subsection{Sections partielles et sections de Birkhoff}
%
\begin{defi}
\'Etant donn\'e une variété réelle~$M$, compacte, sans bord, orientable de dimension 3, et un champ de vecteurs non singulier~$X$ sur~$M$ de classe~$C^1$, engendrant un flot~$(\phi_X^t)_{t\in\R}$. 
Une \term{section de Birkhoff} pour~$(M,(\phi_X^t)_{t\in\R})$ est l'image d'une surface compacte \`a bord $i:S\to M$ telle que 

\begin{minipage}[t]{0.7\textwidth}
\begin{enumerate}
\item l'intérieur de $S$ est plongé dans $M$ et positivement transverse \`a~$X$, 
\item le bord de $S$ est immergé et tangent  \`a~$X$,
\item toute orbite du flot intersecte~$S$ en temps borné (autrement dit, il existe $T>0$ tel que $\phi_X^{[0,T]}(S)=M$).
\end{enumerate}
\end{minipage}
\begin{minipage}[t]{0.3\textwidth}
	\begin{picture}(0,0)(0,0)
	\put(3,-18){\includegraphics[width=.8\textwidth]{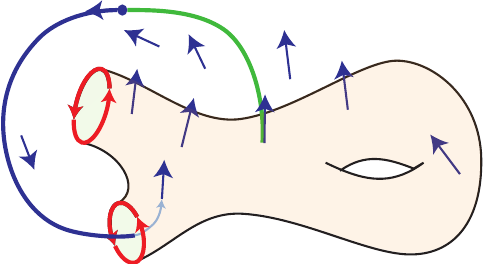}}
	\end{picture}
\end{minipage}
\end{defi}

La condition (2) implique que $i(\partial S)$ est une collection finie d'orbites périodiques de~$\phi_X$, et que $i|_{\partial S}$ est un rev\^etement fini. 
Lorsque $i|_{\partial S}$ est une bijection, on parle de  \term{section de Birkhoff plongée}. 
Si seules les conditions (1) et (2) sont remplies, on parle de \term{section partielle}. 

\'Etant donn\'e une section de Birkhoff~$i:S\to M$ pour~$(M, X)$, notons $k_1, \dots, k_n$ les composantes de l'entrelacs~$\partial S$. 
On note~$m_1, \dots, m_n$ les multiplicités des chaque composante de bord, qui correspondent aux degrés des revêtements $i|_{i^-1(k_1)}, \dots, i|_{i^-1(k_n)}$. 
Supposons donnée une longitude sur chacun des n\oe uds~$k_1, \dots, k_n$. 
On note alors~$b_i/a_i$ la pente induite par $S$ sur chaque composante~$k_i$. 
Dans la variété $M((k_1, b_1/a_1),\dots, (k_n, a_n/b_n))$, la surface~$S$ est alors une surface sans bord (puisque chaque composante de bord a été tuée par la chirurgie), et le flot~$(\phi_X^t)_{t\in\R}$ y est transverse \`a~$S$. 
En notant $f_S$ l'application de premier-retour sur~$S$ le long de~$(\phi_X^t)_{t\in\R}$, on a alors 

\begin{prop}\label{P:chirurgie}
Dans le contexte ci-dessus, $M((k_1, b_1/a_1),\dots, (k_n, a_n/b_n))$ est la variété-suspension de~$(S, f_S)$. 
\end{prop}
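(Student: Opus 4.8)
Mon plan est de ramener l'énoncé au fait classique suivant : un flot non singulier admettant une section globale fermée transverse (rencontrée par toute orbite en temps borné) est la variété-suspension de l'application de premier retour sur cette section. Le travail se fait donc en deux temps. D'abord, je vérifierais qu'après les chirurgies la surface bouchée, que je note~$\hat S$, est bien une telle section globale du flot chirurgé~$(\psi^t)_{t\in\R}$ : une surface fermée, plongée à l'intérieur, partout positivement transverse au flot, et coupée par toute orbite en temps borné. Ensuite, je construirais explicitement l'homéomorphisme entre la variété-suspension de~$(S, f_S)$ et~$M((k_1, b_1/a_1),\dots,(k_n,b_n/a_n))$.

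Pour le premier temps, la transversalité à l'intérieur de~$S$ est fournie par la condition~(1) de la définition d'une section de Birkhoff, et elle subsiste après chirurgie puisque, comme rappelé plus haut, $(\psi^t)_{t\in\R}$ coïncide avec~$(\phi_X^t)_{t\in\R}$ hors d'un voisinage tubulaire de~$k=\bigcup_i k_i$. Restent les tores de chirurgie~$\nu^1(k_i)$ : le flot s'y étend et y induit un feuilletage de nombre de translation asymptotique fixé, et la pente de chirurgie~$b_i/a_i$ --- qui est la pente induite par~$\partial S$ --- est précisément choisie transverse à ce feuilletage. On peut donc prolonger le flot à l'intérieur de chaque tore plein recollé transversalement aux disques méridiens, lesquels bouchent les composantes de~$\partial S$; la surface~$\hat S$ est ainsi fermée et partout transverse. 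La condition~(3) jointe à la compacité donne alors un temps de premier retour~$\tau\colon\hat S\to(0,+\infty)$ continu et borné, et l'application de premier retour~$f_S=\psi^{\tau(\cdot)}$ est un homéomorphisme de~$\hat S$.

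Pour le second temps, je considérerais la variété-suspension $\hat S\times[0,1]\,/\,(x,1)\sim(f_S(x),0)$ munie de l'application
\[
\Phi\colon (x,s)\longmapsto \psi^{\,s\,\tau(x)}(x)\in M((k_1,b_1/a_1),\dots,(k_n,b_n/a_n)).
\]
Elle passe au quotient car $\psi^{\tau(x)}(x)=f_S(x)$, est surjective puisque toute orbite rencontre~$\hat S$, et injective puisque $\tau$ est le \emph{premier} temps de retour (aucune orbite ne recoupe~$\hat S$ pour $0<s<1$). Par transversalité, $\Phi$ est un homéomorphisme local (cartes en boîtes à flot), donc un homéomorphisme global par compacité. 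Ceci identifie~$M((k_1,b_1/a_1),\dots,(k_n,b_n/a_n))$ à la variété-suspension de~$(S,f_S)$.

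L'obstacle principal est la première étape, précisément le contrôle du flot près des tores de chirurgie : il faut garantir que~$b_i/a_i$ diffère du nombre de translation asymptotique du feuilletage induit sur~$\nu^1(k_i)$ (sans quoi aucune famille de disques méridiens transverses n'existe) et que les orbites pénétrant dans les tores pleins recollés rencontrent~$\hat S$ en temps borné. Une fois la propriété de section globale acquise, l'identification à la variété-suspension est purement formelle. Notons enfin que les multiplicités~$m_i$ n'interviennent pas ici : elles ne servent qu'au calcul des pentes~$b_i/a_i$, déjà incorporé à l'énoncé.
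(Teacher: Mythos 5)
Your argument is correct and is essentially the paper's own route: the proposition is stated there without a separate proof, as an immediate consequence of the preceding discussion (the surgered flow exists and coincides with the original one away from~$k$, the capped-off surface is closed and transverse, and condition~(3) gives a bounded first-return time), which is exactly what you spell out via the flow-box homeomorphism $\Phi$. The one point you flag as an obstacle --- that $b_i/a_i$ differs from the asymptotic translation number of the foliation induced on~$\nu^1(k_i)$ --- is in fact automatic: orbits of the extended flow on~$\nu^1(k_i)$ must still cross the trace of~$S$ (a multicurve of slope~$b_i/a_i$) in bounded time, which forces the translation number to differ from that slope.
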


Après chirurgie, les orbites~$k_1, \dots, k_n$ modifiées coupent respectivement $m_1, \dots, m_n$ fois la surface~$S$.

On peut remarquer (ce qui ne sera pas utile pour la suite) que la donnée d'une section de Birkhoff $i:S\to M$ pour $(M,(\phi_X^t)_{t\in\R})$ induit une décomposition en livre ouvert rationnel de~$M$ dont la reliure est donnée par~$\partial S$ et une page par l'intérieur de~$S$. 
Pour obtenir les autres pages, on voudrait pousser~$S$ par le flot~$(\phi_X^t)_{t\in\R}$, mais le temps de premier retour n'est pas constant, ce qui est un problème. 
On pourrait mutliplier~$X$ par une fonction, mais au voisinage de la reliure, ce n'est toujours pas possible d'assurer un temps de premier retour constant. 
Ce qu'on peut alors faire, c'est modifier~$X$ dans un voisinage de la reliure en un champ~$X'$ dont les orbites proches de la reliure (plus précisément les orbites du champ étendu à l'éclatement normal de la reliure) sont des méridiens et restent transverses à~$S$. 
Alors on peut mutliplier~$X'$ par une fonction de sorte que le temps de premier du~$S$ soit constant, disons~$1$. 
Les autres pages du livre ouvert peuvent alors être obtenues en poussant~$S$ par le flot~$(\phi_{X'}^t)_{t\in\R}$. 

\subsection{Surfaces transverses et genre}\label{Genre}

Si $(\phi_X^t)_{t\in\R}$ est un flot d'Anosov sur une variété~$M$ et $S$ une section de Birkhoff (ou plus généralement une surface transverse, la condition de couper toutes les orbites n'est pas nécessaire ici), alors les feuilletages faibles~$\F^s$ et~$\F^u$ sont transverses à l'intérieur de~$S$, et y impriment donc des feuilletages~$\F^s\cap S$ et~$\F^u\cap S$ de dimension~1. 
Quitte à faire une petite isotopie, on peut aussi supposer que, si on éclate l'image du bord de~$S$ dans~$M$, la surface~$S$ est transverse au flot étendu, de sorte que les feuilletages~$\F^s\cap S$ et~$\F^u\cap S$ s'étendent à~$\partial S$, avec des singularités lorsque~$S$ coupe les directions stables ou instables des orbites de bord. 
On peut alors calculer la caractéristique d'Euler de~$S$ \`a partir de ces feuilletages étendus et du théorème de Poincaré-Hopf. 
Pour~$k$ une orbite dans le bord de~$S$, on définit l'\term{auto-enlacement de~$S$ par rapport \`a la direction stable en~$k$} comme le nombre (algébrique) d'intersection entre la courbe imprimée par~$\partial S$ sur~$\nu^1(k)$ et la direction stable~$\Fs$ sur~$\nu^1(k)$; on le note~$\slk_M^s(S, k)$.

\begin{prop}\label{P:genre}\cite{FriedFLP, DR}
Si $(\phi_X^t)_{t\in\R}$ est un flot d'Anosov sur une variété~$M$, $S$ une surface transverse à~$X$ dont le bord est inclus dans un entrelacs~$k_1\cup\dots\cup k_n$, alors on a 
\[\chi(S)=-\sum_{i=1}^n |\slk_M^s(S, k_i)|.\]
\end{prop}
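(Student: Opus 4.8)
\emph{Le plan est d'appliquer le théorème de Poincaré--Hopf au champ de droites tracé sur~$S$ par le feuilletage stable faible, en lisant toute la contribution sur le bord.}

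D'abord, je restreindrais~$\Fs$ et~$\Fu$ à~$S$. Comme chacun de ces feuilletages contient la direction du flot~$X$, laquelle est transverse à~$S$, aucune feuille de~$\Fs$ ou~$\Fu$ n'est tangente à~$S$~: les traces $\mathcal{G}^s:=\Fs\cap S$ et $\mathcal{G}^u:=\Fu\cap S$ sont donc des feuilletages de dimension~1 \emph{sans singularité} à l'intérieur de~$S$. De plus ils y sont transverses l'un à l'autre, puisqu'un point de tangence commun appartiendrait à $\Fs\cap\Fu=\langle X\rangle$, transverse à~$S$. À l'intérieur, $(\mathcal{G}^s,\mathcal{G}^u)$ munit ainsi~$S$ d'une structure plate (de type différentielle quadratique) sans singularité.

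Ensuite, toute la caractéristique d'Euler doit se lire sur le bord. J'éclaterais~$\bord S$ comme en~\ref{S:Eclatement}; après la petite isotopie évoquée avant l'énoncé, $\mathcal{G}^s$ s'étend à~$\bord S\subset\nu^1(k_i)$, où ses feuilles pointent dans la direction stable de l'orbite de bord. Le point clé est que le nombre de rotation de~$\mathcal{G}^s$ le long de~$\bord S$, mesuré relativement au vecteur tangent à~$\bord S$ sur le tore~$\nu^1(k_i)$, est exactement le nombre d'intersection algébrique entre~$\bord S$ et la longitude stable~$\ell^s$, c'est-à-dire~$\slk_M^s(S,k_i)$ par définition. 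Comme~$\mathcal{G}^s$ n'a aucune singularité intérieure et que les feuilles stables sont orientables (de sorte que~$\mathcal{G}^s$ est un vrai champ de vecteurs, et non seulement un champ de droites), la version relative du théorème de Poincaré--Hopf donne
\[\chi(S)=\sum_{i=1}^n \slk_M^s(S,k_i).\]
Enfin, la transversalité positive de~$S$ au flot d'Anosov (contraction de~$\Fs$) force~$\slk_M^s(S,k_i)\le 0$ pour chaque~$i$, d'où $\chi(S)=-\sum_{i=1}^n|\slk_M^s(S,k_i)|$.

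Le principal obstacle est l'analyse locale au bord~: il faut, dans le modèle-selle au voisinage d'une orbite périodique et après éclatement, justifier rigoureusement que~$\mathcal{G}^s$ s'étend à~$\nu^1(k_i)$, identifier sa direction à la longitude stable, et faire coïncider le décompte (entier ou demi-entier selon l'orientabilité) du nombre de rotation avec le nombre d'intersection~$\slk_M^s$ --- en particulier contrôler le signe~$\slk_M^s\le 0$. Une fois ce modèle local établi, la platitude intérieure et l'étape globale de Poincaré--Hopf (ou, de façon équivalente, de Gauss--Bonnet pour la structure plate à bord géodésique par morceaux, chaque coin fournissant la contribution voulue) sont de routine.
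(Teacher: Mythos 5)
Your overall strategy coincides with the paper's: apply Poincaré--Hopf to the singular foliation $\Fs\cap S$, whose singularities are confined to the boundary, and read everything there. Your interior analysis (no singularities inside $S$, transversality of the two traces) is correct and is exactly the paper's starting point. The gap is in the boundary step, which is the entire content of the proposition, and your sketch of it is not merely deferred but misstated. After blow-up, the extension of $\Fs\cap S$ to $\bord S$ does \emph{not} ``point in the stable direction of the boundary orbit'', and it is \emph{not} a nonsingular field along $\bord S$: the extended weak stable foliation is tangent to the torus $\nu^1(k_i)$ along the two annuli complementary to the stable prolongations, so the induced foliation on $S$ has the arcs of $\bord S$ as leaves, with singularities exactly where $\bord S$ crosses the two curves of $\Fs\cap\nu^1(k_i)$; each such point is a half-saddle (two boundary separatrices, one interior separatrix) of index $-1/2$. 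Hence ``the rotation number of $\mathcal{G}^s$ along $\bord S$ relative to $T\bord S$'' is not even defined until precisely this local analysis has been carried out, and once it has, Poincaré--Hopf immediately gives $\chi(S)=-\tfrac12\,\#\{\text{half-saddles}\}$, which is the paper's proof.

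Your key identity --- rotation number $=$ algebraic intersection of $\bord S$ with $\ell^s$, ``c'est-à-dire $\slk_M^s(S,k_i)$ par définition'' --- is therefore not a definition, and it is not correct as stated: the boundary contribution equals $-\tfrac12$ times the \emph{geometric} number of crossings of $\bord S$ with \emph{both} stable prolongations, a quantity that is automatically $\le 0$, whereas $\slk_M^s(S,k_i)$ is a signed homological count against \emph{one} prolongation. Identifying the two requires (i) that all crossings carry the same sign --- true because $\bord S$ is transverse to the extended flow on $\nu^1(k_i)$ and the stable curves are orbits of that flow, so $\det(X,T\bord S)$ has constant sign; this is what yields $\#\{\text{half-saddles}\}=2\,|\slk_M^s(S,k_i)|$ --- and (ii) your asserted inequality $\slk_M^s(S,k_i)\le 0$, which is an orientation computation that does not follow from ``contraction de $\Fs$'' and which you never carry out; given the half-saddle structure, (ii) is exactly equivalent to your rotation identity, so the proposal is circular at its crucial point. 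Note that the paper's bookkeeping --- index $-1/2$ per crossing, both sides counted, absolute value in the statement --- is designed precisely to bypass (ii). A last remark: insisting that $\mathcal{G}^s$ be a genuine vector field works against you, since indices $\pm\tfrac12$ at boundary singularities exist only for line fields or singular foliations; orientability of $\Fs$ buys you nothing here.
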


\begin{proof}[Preuve]
Il suffit compter les singularités du feuilletage~$\F^s\cap S$ : elles sont toutes au bord de~$S$, proviennent d'une 

\noindent
\begin{minipage}[t]{0.7\textwidth}
intersection entre le bord de~$S$ et la direction stable de l'orbite de bord, et contribuent pour~$-1/2$ \`a la caractéristique d'Euler dans la formule de Poincaré-Hopf. 
Le fait qu'on n'ait pas ce facteur~$1/2$ dans la formule vient du fait que dans l'auto-enlacement, on ne compte qu'un seul côté de la variété stable en une orbite de bord, et non les deux. 
\end{minipage}
\begin{minipage}[t]{0.3\textwidth}
	\begin{picture}(0,0)(0,0)
	\put(2,-16){\includegraphics[width=.95\textwidth]{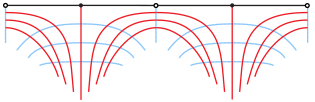}}
	\end{picture}
\end{minipage}
\vspace{-7mm}

\end{proof}

\subsection{Preuve du théorème~\ref{Th} \`a partir des propositions~\ref{P:h} et~\ref{P:3huit}}

Admettons la proposition~\ref{P:h}. 
%

Puisque~$S_h$ est une section de Birkhoff pour~$\fdts$, l'orbite périodique~$\vec h$ forme un n\oe ud fibr\'e dans~$\U\Sdts$, dont la fibre est donnée par~$S_h$. 
Notons~$f_h:S_h\to S_h$ l'application de premier retour de long de~$\fdts$ correspondante. 
Puisque~$\U\Sdts$ est une sphère d'homologie entière, $\vec h$ porte une longitude de Seifert canonique, qui est d'ailleurs donnée par la direction du bord de~$S_h$ dans~$\nu^1(\vec h)$. 
Alors d'après la proposition~\ref{P:chirurgie} la variété~$\U\Sdts(\vec h, 0)$ est la variété-suspension de~$(S_h, f_h)$. 

Puisque~$S_h$ est un tore à bord transverse \`a~$\fdts$, le feuilletage~$\F^s\cap S_h$ est invariant par~$f_h$, et il est contracté uniformément. 
De même $\F^u\cap S_h$ est invariant et dilaté uniformément. 
On a déduit que~$f_h$ est une application d'un tore à une composante de bord qui préserve deux feuilletages transverses, contractant l'un et dilatant l'autre. 
En contractant la composante de bord en un point, $f_h$ induit alors une application pseudo-Anosov~$\bar f_h$ du tore~$\bar S_h$. 
Or cette application~$\bar f_h$ a au plus une singularité, qui correspond à la compactification de~$\vec h=\partial S_h$ en un point. 
Comme $\bar f_h$ est un tore, l'indice de cette singularité est~$0$. 
Ainsi chacun des deux feuilletages a exactement deux demi-feuilles qui arrive sur la singularité, et donc celle-ci n'en est en fait pas une, c'est-à-dire que~$\bar f_h$ est en fait un difféomorphisme d'Anosov du tore, qui est donc donné par une classe de conjugaison dans~$\SLZ$. 

Le nombre du points fixes d'une application Anosov du tore correspond \`a la trace $-2$ de la classe de conjugaison correspondante. 
Ici on a donc une classe de conjugaison dans~$\SLZ$ de trace~3. 
Or les classes de conjugaison des matrices non périodiques dans~$\SLZ$ sont énumérées par les mots positifs en~$L=\matL$ et $R=\matR$, à permutation cyclique près (voir~\cite[Prop. 4.3]{Lorenz} pour une preuve de ce fait est en fait une conséquence de l'algorithme de division euclidienne).
En particulier, il n'y a qu'une classe de conjugaison entière en trace~$3$, celle de~$RL=\cat$. 
Par conséquent~$\U\Sdts(\vec h, 0)$ est la variété-suspension de~$(\T^2, \cat)$.
Comme cette dernière est aussi~$\Sph^3(~\Huit, 0)$, les variétés~$\U\Sdts(\vec h, 0)$ et~$\Sph^3(~\Huit, 0)$ co\"incident. 
En particulier les complémentaires~$\U\Sdts\setminus\vec h$ et $\Sph^3\setminus$\Huit~co\"incident, ce qui démontre le théorème~\ref{Th}. 
Notons que cela implique aussi que $\U\Sdts$ est obtenu à partir de $\Sph^3$ par chirurgie sur le n\oe ud~$~\Huit$. 

\medskip
Pour la seconde partie, on applique le même raisonnement à l'orbite périodique~$\vec\gamma_8$ de~$\fttq$. 


\section{Flots Anosov, sphères d'homologie et existence de sections de Birkhoff}

On donne à présent les éléments permettant de déduire les propositions~\ref{P:h} et~\ref{P:3huit} des propositions~\ref{P:hEnl} et~\ref{P:3huitEnl}.

\subsection{Critère de Schwartzman-Fried pour les flots Anosov}

Partant d'une variété compacte sans bord~$M$ munie d'un flot~$(\phi_X^t)_{t\in\R}$, d'une collection finie~$k_1\cup\dots\cup k_n$ d'orbites périodiques, de multiplicités et de pentes sur ces orbites périodiques, on peut se demander s'il existe une section de Birkhoff~$i:S\to M$ pour~$(M, X)$ dont le bord est~$k_1\cup\dots\cup k_n$, avec les multiplicités et les pentes prescrites. 
\'Evidemment, il y a une obstruction topologique : il faut que cette donnée de bord corresponde au bord d'une surface. 
Cela donne une condition de codimension~$b_1(M)$ que nous ne détaillerons pas (les cas qui nous intéressent ici étant dans des sphères d'homologie pour lesquelles la condition est vide). 
Prenant cette condition en compte on peut reformuler le problème ce la fa\c con suivante~: \'etant donné une classe d'homologie~$\sigma\in\H_2(M, k_1\cup \dots\cup k_n; \Z)$, existe-t-il une section de Birkhoff pour~$(\phi_X^t)_{t\in\R}$ de classe~$\sigma$? 

Une condition nécessaire est que $\sigma$ coupe positivement les classes d'homologie de toutes les orbites périodiques de~$(\phi_X^t)_{t\in\R}$. 
En effet, si l'intersection entre les classes d'homologie était négative, la section ne pourrait couper l'orbite en des points avec intersection positive. 
Cette condition n'est en général pas suffisante~:
pour cela, il faut introduire la notion de \emph{cycle asymptotique} qui sont des classes d'homologie associées aux mesures invariantes du flot. 
Une définition précise peut être donnée en créant des cycles à partir de bouts d'orbites dont la longueur tend vers l'infini refermés arbitrairement~\cite{Schwartzman}, ou bien à parti de mesures invariantes et en passant par les courants~\cite{Sullivan}. 
Dans le cas des flots d'Anosov (ou pseudo-Anosov) transitifs, la densité des orbites périodiques permet néanmoins de se passer de ces subtilités, et la condition sur les orbites périodiques se trouve être suffisante~:

\begin{theo}\cite{FriedGeometry} 
Soit $(\phi_X^t)_{t\in\R}$ un flot d'Anosov sur une variété compacte~$M$ de dimension~3, $k_1, \dots, k_n$ des orbites périodiques de~$\phi_X$ et $\sigma\in\H_2(M, k_1\cup\dots\cup k_n; \Z)$ une classe d'homologie relative entière. 
La classe~$\sigma$ contient une section de Birkhoff pour~$(\phi_X^t)_{t\in\R}$ si et seulement si, pour toute orbite périodique~$k$ du flot étendu~$(\phi_X^t)_{t\in\R}$ à l'éclaté normal~$\overline{M\setminus(k_1\cup\dots\cup k_n)}$, on a 
\[\langle[k], \sigma\rangle>0.\]
\end{theo}

\subsection{Enlacement et dualité dans les sphères d'homologie}

\begin{defi}
Soit $M$ une vari\'et\'e compacte sans bord de dimension 3 qui est une sph\`ere d'homologie rationnelle et~$k_1, k_2$ deux entrelacs disjoints dans~$M$. 
Soit $S_1$ une 2-chaine rationnelle $S_1$ de bord~$k_1$. 
L'\term{enlacement de~$k_1$ et~$k_2$} est le nombre d'intersection alg\'ebrique entre les chaines $[S_1]\in \H_2(M, k_1; \Q)$ et~$[k_2]\in\H_1(M\setminus K_1; \Q)$, c'est-à-dire~$\Enl_M(k_1, k_2)=\langle S_1, k_2\rangle$.
\end{defi}

L'enlacement est en fait symétrique (ce qui se voit mieux si on prend une définition à l'aide d'intersections en dimension~4). 
Si $k_1$ est nul en homologie enti\`ere, alors on peut prendre pour $S_1$ une 2-chaine entière, de sorte que l'enlacement est entier si l'un des deux entrelacs est nul en homologie enti\`ere. 

On peut faire le lien entre la définition précédente et l'auto-enlacement d'une surface bordée par les orbites périodiques d'un flot d'Anosov introduit en~\ref{Genre} :
si $S$ est une surface dont le bord~$k_1\cup\dots\cup k_n$ est fait d'orbites périodiques d'un flot d'Anosov, on peut considérer l'entrelacs $k_1^{+s}\cup\dots\cup k_n^{+s}$ obtenu en dépla\c cant $k_1\cup \dots\cup k_n$ le long de la direction stable du flot. 
Alors on a $\Enl_M(k_1\cup \dots\cup k_n, k_1^{+s}\cup\dots\cup k_n^{+s})=\sum_{i=1}^n\Enl_M^s(S, k_i)$. 

Pour $M$ une sphère d'homologie rationnelle, $\H_2(M, k_1\cup\dots\cup k_n; \Z)$ est de dimension~$n$ et sans torsion, avec une base donnée par $([S_1], \dots, [S_n])$, où~$S_1, \dots, S_n$ sont des surfaces de Seifert pour~$k_1, \dots, k_n$. 
La dualité d'Alexander implique que l'enlacement porte toute la cohomologie des complémentaires d'entrelacs :

\begin{theo}
Soit $M$ une sphère d'homologie rationnelle en dimension~$3$ et $k_1\cup \dots\cup k_n$ un entrelacs dans~$M$. 
Alors $\H_1(M\setminus k_1\cup \dots\cup k_n; \Q)$ est de dimension~$n$, avec une base donnée par $(\Enl(k_1, \cdot), \dots, \Enl(k_n, \cdot))$.
\end{theo}

\subsection{Critère de Schwartzman-Fried en termes d'enlacement}

Les observations précédentes permettent de traduire le critère de Schwartzman-Fried pour les flots d'Anosov dans les sphères d'homologie~:

\begin{theo}\label{T:SFEnl}
Soit $(\phi_X^t)_{t\in\R}$ un flot d'Anosov sur une variété compacte~$M$ de dimension~3 qui est une sphère d'homologie rationnelle, $k_1, \dots, k_n$ des orbites périodiques de~$(\phi_X^t)_{t\in\R}$ et $m_1, \dots, m_n$ des entiers relatifs tels que $m_1[k_1]+\dots+m_n[k_n]=0$ dans~$\H_1(M; \Q)$. 
Alors il existe une section de Birkhoff pour~$(\phi_X^t)_{t\in\R}$ de bord $m_1k_1\cup\dots \cup m_nk_n$ si et seulement si, pour toute orbite périodique~$k$ du flot étendu~$(\phi_X^t)_{t\in\R}$ à $\overline{M\setminus(k_1\cup\dots\cup k_n)}$, on a 
\[\sum_{i=1}^nm_i\,\Enl_M(k_i, k)>0.\]
\end{theo}

\subsection{Preuves de~\ref{P:hEnl}$\implies$\ref{P:h} et \ref{P:3huitEnl}$\implies$\ref{P:3huit}}

Admettons la proposition~\ref{P:hEnl}. 
On rappelle de la section~\ref{S:Eclatement} que la variété~$\U\Sdts$ est une sphère d'homologie entière. 

On travaille dans la variété~$\overline{\U\Sdts\setminus\vec h}$ dont on rappelle qu'elle est l'éclaté normal de~$\U\Sdts$ le long de l'orbite~$\vec h$. 
Dans cette variété, on dispose du flot géodésique étendu~$\fdts$. 
Les orbites périodiques de ce dernier correspondent aux orbites périodiques du flot initial dans~$\U\Sdts$, sauf que l'orbite~$\vec h$ donne naissance \`a quatre orbites périodioques dans son éclaté~$\nu^1(\vec h)$, deux stables~$\Fs\cap\nu^1(\vec h)$ et deux instables~$\Fu\cap\nu^1(\vec h)$. 
Alors, d'après les points (1) et (2) de la proposition~\ref{P:hEnl}, le critère de Schwartzman-Fried~\ref{T:SFEnl} est satisfait, et donc $-\vec h$ borde (avec multiplicité~$-1$, comme indiqué par le signe) une section de Birkhoff, notons-la~$S_h$.

Comme la surface~$S_h$ est transverse au flot~$\fdts$ et que ce dernier est d'Anosov, la caractéristique d'Euler~$\chi(S_h)$ est donnée par la proposition~\ref{P:genre}. 
D'après le point (2) de la proposition~\ref{P:hEnl}, l'auto-enlacement de~$S_h$ par rapport \`a la direction stable de~$\fdts$ vaut~$-1$. 
Cela signifie que la surface~$S_h$ (de bord~$\vec h$) coupe $-1$ fois le n\oe ud~$\vec h^{+s}$, et donc~$-1$ fois la direction stable du flot.
Par conséquent on a $\chi(S_h)=-1$. 
Or $S_h$ a une composante de bord, donc $S_h$ est un tore. 

Enfin l'application de premier retour sur~$S_h$ le long de~$\fdts$ est un difféomorphisme d'Anosov. 
Les points fixes de ce difféomorphisme correspondent aux orbites du bord de~$S$ qui s'auto-enlacent 1 fois, et les orbites périodiques coupant l'intérieur de~$S_h$ exactement une fois. 
Or le point~(3) de la proposition~\ref{P:hEnl} dit qu'aucune orbite différente de~$\vec h$ ne coupe~$S_h$ moins de 2 fois. 
Donc l'application de premier retour sur~$S_h$ n'a qu'un point fixe, correspond \`a l'orbite~$\vec h$ elle-même. 
Le premier retour est donc bien donné par la matrice~$\cat$.

\medskip
Pour ce qui est de \ref{P:3huitEnl}$\implies$\ref{P:3huit}, l'argument est presque le même. 
Il y a néanmoins une subtilité supplémentaire, liée au fait que~$\U\Sttq$ n'est pas une sphère d'homologie entière, mais seulement rationnelle~: en effet on a~$\H_1(\U\Sttq; \Z)=\Z/3\Z$. 
Par conséquent, ou bien~$[\vec\gamma_8]$ est nulle dans~$\H_1(\U\Sttq; \Z)$, ou bien elle ne l'est pas mais $3\vec\gamma_8$ l'est. 
Dans le premier cas, l'enlacement entre $\vec\gamma_8$ est n'importe quel n\oe ud serait entier. 
Or l'énoncé de la proposition~\ref{P:3huitEnl} affirme que son auto-enlacement relativement à la direction stable vaut~$-1/3$. 
Par conséquent, on est dans le second cas~: $[\vec\gamma_8]$ est une classe non nulle dans~$\H_1(\U\Sttq; \Z)$, mais $3[\vec\gamma_8]$ est nulle. 

On peut alors appliquer le critère de Schwartzman-Fried pour déduire que $3\vec\gamma_8$ borde une section de Birkhoff pour~$\fttq$, notée~$S_8$. 
Par la proposition~\ref{P:genre} la caractéristique de~$S_8$ est alors donnée par~$3\Enl(\vec\gamma_8, \vec\gamma_8^{+s})$, ce qui donne~$-1$, et donc $S_8$ est un tore à une composante de bord. 
Enfin, le calcul de nombre de points fixes pour le premier retour est~$1$, par le m\^eme raisonnement, et donc l'application de premier retour sur~$S_8$ le long de~$\fttq$ est aussi~$\cat$.


\section{Flots géodésiques, patrons et calculs d'enlacement}

On démontre maintenant les propositions~\ref{P:hEnl} et~\ref{P:3huitEnl}. 
Pour cela il faut comprendre la topologie des orbites périodiques des flots géodésiques dans les variétés~$\U\Sdts$ et~$\U\Sttq$. 
Nous le faisons en deux étapes~: d'abord ramener toutes les orbites périodiques des flots géodésiques sur deux \emph{patrons}, c'est-à-dire des surfaces branchées munies d'un flot dont le plongement sera explicite, et enfin calculer (ou plutôt majorer) les enlacements entre orbites périodiques de ces patrons.

\subsection{Patrons pour les flots géodésiques}

Les flots d'Anosov sont des flots markoviens, au sens où on peut décomposer la variété en boites de flots ``rectangulaires'', de sorte que les bords de ces bo\^ites s'agencent particulièrement bien. 
En contractant ces bo\^ites le long de la direction stable, on obtient alors des ``rubans'' qui vont se recoller aussi particulièrement bien. 
C'est ainsi que Birman et Williams ont montré~\cite{BW} comment on peut construire une surface branchée, appelée \term{patron}, plongée dans la variété, munie d'un flot (en fait un semi-flot car le passé n'est pas défini de façon unique), telle que toute collection finie d'orbites périodiques du flot de départ est isotope à une collection finie d'orbites périodiques du patron. 
Pour un flot explicite, construire un tel patron requiert une partition de Markov explicite, ce qui n'est pas toujours facile à obtenir. 
Aussi le nombre de rubans sera égale au nombre de bo\^ites de la partition. 
De telles partitions ont été construites par Bowen-Series, puis par Pit~\cite{BS,Pit}. 
Il se trouve que leurs constructions laissent de côté les orbisphères triangulaires. 

Dans notre article~\cite{DPinski}, nous avons affaibli les contraintes sur les rubans pour obtenir un patron avec seulement

\noindent\begin{minipage}[t]{0.6\textwidth}
deux rubans, qui porte toute la topologie des orbites périodiques du flot géodésique~$(\phi_{p,q,r}^t)_{t\in\R}$ sur~$\U\Sigma_{p,q,r}$; on le note~$\mathcal{T}_{p,q,r}$. 
On rappelle (théorème~\ref{T:Chirurgie}) que la variété~$\U\Spqr$ est obtenue à partir de~$\Sph^3$ par chirurgies d'indices~$p,q,r$ sur les trois composantes d'un entrelacs de Hopf~\Hopf. 
Le patron~$\mathcal{T}_{p,q,r}$ y est plongé comme indiqué ci-contre, en s'enroulant autour des deux composantes de~\Hopf~correspondants aux points d'ordre $p$ et $q$. 
Le patron~$\mathcal{T}_{p,q,r}$ est une surface branchée, munie d'un semi-flot (le long du segment de branchement, le passé n'est pas uniquement défini). 
Il a les deux propriétés suivantes : d'une part on peut coder toutes ses orbites périodiques \`a l'aide de mots bi-infinis périodiques sur l'alphabet~$\{a,b\}$ -- $a$ pour un tour sur l'oreille de gauche et $b$ pour un tour sur l'oreille de droite -- et les mots admissibles sont \end{minipage}
\begin{minipage}[t]{0.4\textwidth}
	\begin{picture}(0,0)(0,0)
	\put(4,-46){\includegraphics[width=.85\textwidth]{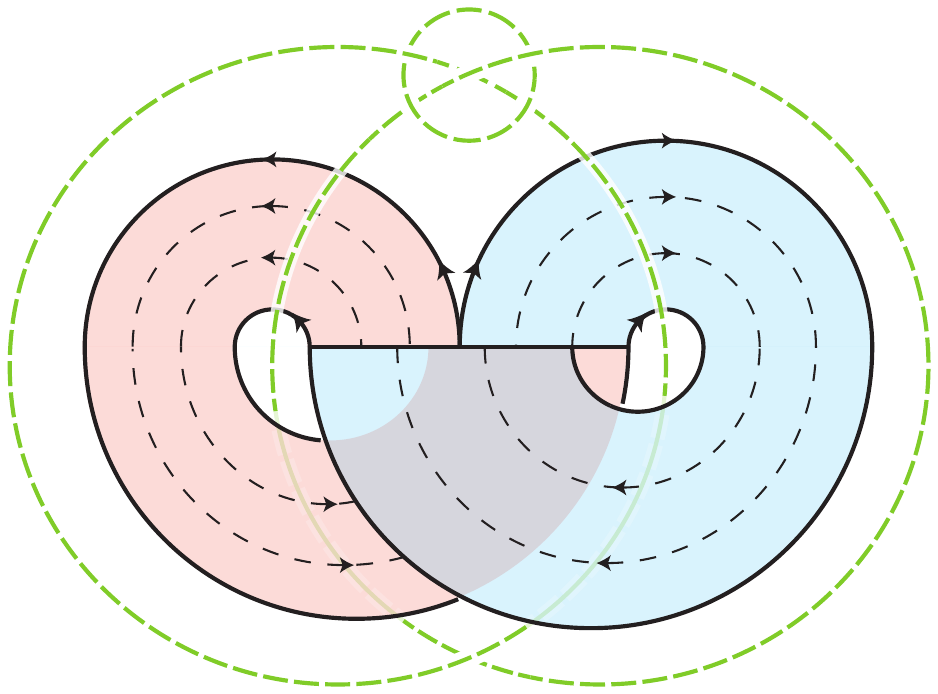}}
	\end{picture}
\end{minipage}

\vspace{1mm}
\noindent caractérisées par le fait qu'eux et leurs images par le décalage sont compris pour l'ordre lexicographique entre deux mots infinis explicites, appelés kneading sequences~\cite[table 1]{DPinski}. 
D'autre part ses orbites périodiques sont en bijection avec les orbites périodiques du flot géodésique~$(\phi_{p,q,r}^t)_{t\in\R}$. 

Le codage peut être compris sur un pavage de~$\Hy$ par des triangles hyperboliques d'angles~$2\pi/p, 2\pi/q, 2\pi/r$~: 
en dessinant un réseau~$\mathcal{R}_{p,q,r}$ de ``rond-points'' qui tournent trigonométriquement autour des points d'ordre~$p$ et horairement autour des points d'ordre~$q$, on voit que toute géodésique de~$\Hy$ peut être déformée sur~$\mathcal{R}_{p,q,r}$. 
Si on veut le faire de fa\c con canonique et équivariante, il faut \^etre soigneux~\cite[Section 3]{DPinski}. 
\`A la fin, le nombre de~$p$-ièmes de tour de la géodésique déformée autour d'un point d'ordre~$p$ donne une suite de~$a$, et le nombre de $q$-ièmes de tour autour d'un point d'ordre~$b$ donne une suite de~$b$. 

\noindent\begin{minipage}[t]{0.6\textwidth}
\hspace{3mm}
Dans le cas~$p=2, q=3, r=7$ qui nous intéresse, on voit que le réserau~$\mathcal{R}_{2,3,7}$ est constitué de ronds-points d'ordre~2 et 3. 
Les kneading sequences donnent des contraintes sur les mots admissibles. 
Plutôt que de les écrire, nous préférons expliquer leur signification géométrique. 
Par exemple il ne peut y avoir deux $a$ consécutifs, car cela correspondrait à une courbe qui s'enroule autour du point d'ordre~$2$ et non à une géodésique. 
De même il ne peut y avoir trois $b$ consécutifs. 
En réécrivant $x$ pour $ab$ et $y$ pour $abb$, on voit que toute géodésique est codée par une suite de~$x$ et de~$y$. 
La contrainte suivante interdit à une orbite de ``s'enrouler'' autour des points d'ordre~$7$, ce qui se traduit par le fait qu'on ne peut avoir plus de deux $x$ ou deux~$y$ consécutifs. 
Une fois ces premières contraintes prises en compte, on voit que les premières orbites périodiques du patron~$\mathcal{T}_{2,3,7}$ (ordonnées par nombre de tours autour du patron) sont codées par~$(xy)^\Z=(ababb)^\Z$, $(x^2y)^\Z=(abababb)^\Z$, $(xy^2)^\Z=(ababbabb)^\Z$, $(x^2y^2)^\Z=(abababbabb)^\Z$, $(x^2yxy)^\Z$, $(xyxy^2)^\Z$, $(x^2yxy^2)^\Z$, $(x^2y^2xy)^\Z$, $(x^2yx^2y^2)^\Z$, \end{minipage}
\begin{minipage}[t]{0.4\textwidth}
	\begin{picture}(0,0)(0,0)
	\put(4,-53){\includegraphics[width=.5\textwidth]{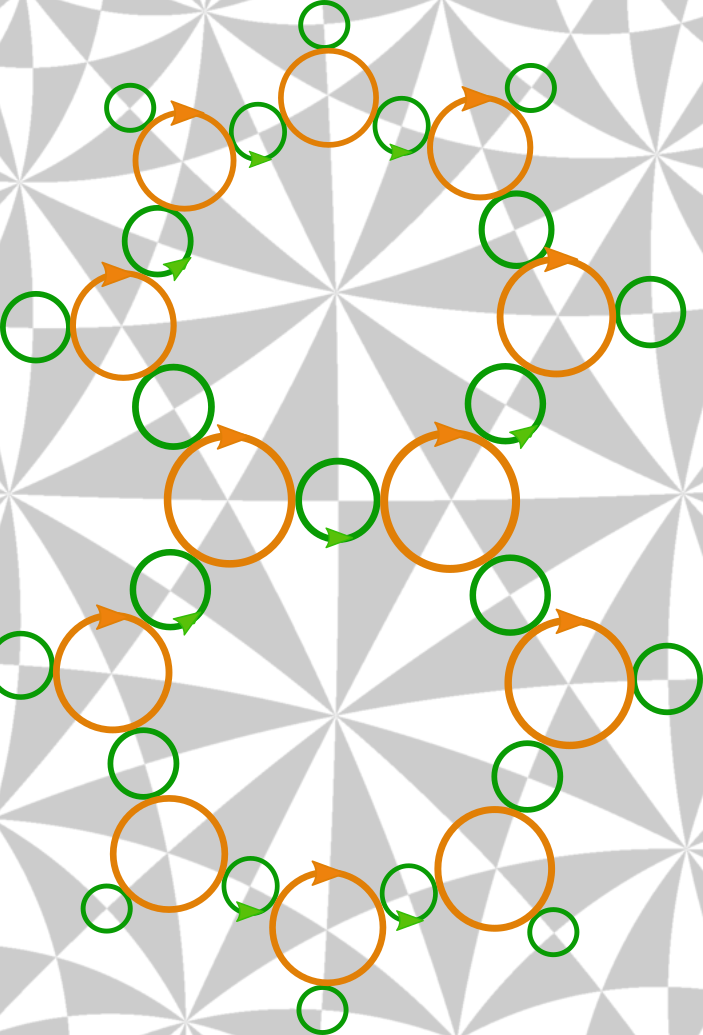}}
	\put(44,-66){\includegraphics[width=.3\textwidth]{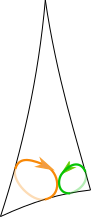}}
	\end{picture}
\end{minipage}
\vspace{-1mm}

\noindent
$(x^2y^2xy^2)^\Z$, etc. 
Il se trouve que les codes~$(xy)^\Z$, $(x^2y)^\Z$, $(xy^2)^\Z$ correspondent tous trois \`a la géodésique~$h$~\footnote{Les kneading sequences sont censées assurer l'unicité du codage. Dans ce cas elles interdisent les suites $(x^2y)^\Z$, $(xy^2)^\Z$, mais pour notre problème, ne pas avoir l'unicité n'est pas un problème, du moment que toute orbite périodique du flot géodésique est codée par au moins un mot.}.

 
De fa\c con semblable, dans le cas~$p=3, q=3, r=4$, le réserau~$\mathcal{R}_{3,3,4}$ est constitué de deux ronds-points autour des points d'ordre~3. 
Une géodésique est encore codée par une suite de~$a$ et de $b$, cette fois avec jamais trois~$a$ ni trois~$b$ consécutifs. 
Les contraintes liées au point d'ordre~$4$ impliquent qu'un $ab^2$ ne peut être suivi d'un autre $ab^2$. 
De même un~$a^2b$ ne peut être suivi d'un autre $a^2b$. 
Avec cela, on voit que les premières orbites périodiques de~$\mathcal{T}_{3,3,4}$ sont codées par $(ab)^\Z$ et $(a^2b^2)^\Z$ qui correspondent justement aux deux relevés de~$\gamma_8$, selon l'orientation choisie. 
Les orbites suivantes correspondent aux codes~$(a^2bab)^\Z$, $(abab^2)^\Z$, $(a^2bab^2)^\Z$, $(a^2b^2ab)^\Z$, $(a^2ba^2b^2)^\Z$, $(a^2b^2ab^2)^\Z$, etc. 

\subsection{Formule d'enlacement dans~$\U\Spqr$}\label{Hopf}

Puisqu'on va chercher à calculer des nombres d'enlacement dans~$\U\Sdts$ et~$\U\Sttq$ et que ces deux variétés sont obtenues par chirurgie sur l'entrelacs~\Hopf~de~$\Sph^3$, on cherche une formule qui, \'etant donné deux n\oe uds dans~$\Sph^3$ disjoints de l'entrelacs de Hopf, relie l'enlacement de ces courbes avant et après chirurgie. 
Une telle formule n'est pas difficile à obtenir, il suffit de comprendre comment obtenir une 2-chaine bordée par un n\oe ud après chirurgie à partir d'une 2-chaine avant chirurgie. 
On note~$Q_{p,q,r}$ la forme bilinéaire symétrique sur~$\R^3$ donnée par la matrice $\frac1{pqr-pq-qr-pr}\left(\begin{smallmatrix}
qr-q-r & r & q\\
r & pr-p-r & p\\
q & p & pq-p-q 
\end{smallmatrix}\right)$.

\begin{lemme}\cite{Left}
	\label{L:Enlacement}
	Pour $k_1, k_2$ deux entrelacs de~$\Sph^3$ disjoints et disjoints de l'entrelacs de Hopf~\Hopf, leur nombre d'enlacement après avec effectué des chirurgies de pentes~$(p-1, q-1, r-1)$ sur les trois composantes~$H_1, H_2, H_3$ de~\Hopf~est donné par
	\[\Enl_{\U\Spqr}(k_1,k_2) = \Enl_{\Sph^3}(k_1, k_2) + Q_{p,q,r}
	\left( 
		\left(\begin{smallmatrix}
			\Enl_{\Sph^3}(k_1, H_1)\\
			\Enl_{\Sph^3}(k_1, H_2)\\
			\Enl_{\Sph^3}(k_1, H_3)
		\end{smallmatrix}\right),
		\left(\begin{smallmatrix}
			\Enl_{\Sph^3}(k_2, H_1)\\
			\Enl_{\Sph^3}(k_2, H_2)\\
			\Enl_{\Sph^3}(k_2, H_3)
		\end{smallmatrix}\right)
	\right)
	.\]
\end{lemme}

\subsection{Preuve des propositions~\ref{P:hEnl} et~\ref{P:3huitEnl}}

Le premier point des propositions~\ref{P:hEnl} et~\ref{P:3huitEnl} a en fait déjà été démontré~\cite{Left} puisqu'il y est démontré que deux orbites \emph{quelconques} de~$\fdts$ ou de~$\fttq$ ont un enlacement négatif. 

On va néanmoins expliquer la preuve de la proposition~\ref{P:hEnl}. 
Pour deux orbites~$k_1, k_2$ du patron~$\mathcal{T}_{2,3,7}$ codées par des mots~$w_1, w_2$, on peut spécifier le lemme~\ref{L:Enlacement}~: 
l'enlacement $\Enl_{\U\Spqr}(k_1,k_2)$ est la somme de l'enlacement $\Enl_{\Sph^3}(k_1,k_2)$ qu'on voit sur le patron et qui est toujours un nombre négatif, et du terme bilinéaire qu'on peut ici spécifier~: en notant~$a_i$ le nombre de lettres $a$ dans~$w_i$ et $b_i$ le nombre de lettres $b$ dans~$w_i$, ce terme vaut $11a_1a_2-7(a_1b_2+b_1a_2)+5b_1b_2$~: il est toujours positif. 

Il se trouve que $\Enl_{\Sph^3}(k_1,k_2)$ est presque bilinéaire en les codes~$w_1, w_2$. 
Plus précisément, si $k_1$ est l'orbite~$\vec h$, alors~$w_1$ est le code~$(xy)^\Z=(ababb)^\Z$. 
Si $k_2$ est une orbite dont le code contient au moins deux syllabes en~$x^*y^*$, c'est-\`a-dire est différent de~$(xy)^\Z, (xy^2)^\Z, (x^2y)^\Z, (x^2y^2)^\Z$, alors on peut trouver deux orbites $k_3, k_4$ de~$\mathcal{T}_{2,3,7}$ de codes~$w_3^\Z$, $w_4^\Z$ telles que $w_2=w_3w_4$, $k_2$ est homotope dans le complément de~$k_1=\vec h$ à la somme connexe de~$k_3$ et $k_4$, et donc en particulier $\Enl_{\Sph^3}(\vec h,k_2)=\Enl_{\Sph^3}(\vec h,k_3)+\Enl_{\Sph^3}(\vec h,k_4)$. 

Ainsi il suffit de vérifier que $\vec h$ s'enlace négativement avec les orbites de codes~$(xy)^\Z, (xy^2)^\Z, (x^2y)^\Z, (x^2y^2)^\Z$ pour vérifier qu'elle s'enlace négativement avec toutes les orbites périodiques de~$\mathcal{T}_{2,3,7}$. 
L'utilisation du lemme~\ref{L:Enlacement} donne alors 
\[\Enl_{\U\Sdts}(\vec h, (xy)^\Z) = \Enl_{\U\Sdts}(\vec h, (x^2y)^\Z) = \Enl_{\U\Sdts}(\vec h, (xy^2)^\Z) = -1\quad\mathrm{et}\quad\Enl_{\U\Sdts}(\vec h, (x^2y^2)^\Z)=-2,\]
ce qui démontre le premier point de la proposition~\ref{P:hEnl}.

Pour le second point, il s'agit de calculer un auto-enlacement. 
Il se trouve que le framing donné par la direction stable du flot d'Anosov est isotope au framing donné par la direction du patron~$\mathcal{T}_{2,3,7}$. 
On a alors comme précédemment
\[\Enl_{\U\Sdts}(\vec h, \vec h^{+s})=-1.\]

Le troisième point a en fait été démontré en même temps que le premier: 
puisque l'enlacement avec~$\vec h$ peut \^etre rendu linéaire, on a vu que la seule orbite de~$\fdts$ ayant enlacement~$-1$ avec~$\vec h$ est $\vec h$ elle-même. 

\medskip
La proposition~\ref{P:3huitEnl} se démontre de la même fa\c con, mais il faut tenir compte de contraintes légèrement différentes sur les codes. 

%
%
%
%
%
%
%


\appendix

\section{Sections de Birkhoff de genre 1 pour les flots géodésiques}

Les propositions~\ref{P:h} et~\ref{P:3huit} sont des exemples de sections Birkhoff pour des flots géodésiques sur des orbisurfaces hyperboliques. 
Plusieurs énoncés du même type ont été démontrés dans les 40 dernières années.  
Dans cet appendice nous tentons un recensement.

%
Le premier énoncé du genre consiste en une construction de Birkhoff, popularisée par Fried dans le cas d'une surface hyperbolique. 
Pour $\gamma=\gamma_1\cup\dots\cup\gamma_n$ une collection de g\'eod\'esiques sur une surface~$\Sigma$, on note $\vecgamma$ son \emph{relev\'e antith\'etique} (ou relev\'e sym\'etrique) dans~$\U\Sigma$, \`a savoir l'entrelacs ayant $2n$ composantes associ\'ees aux $n$ composantes de~$\gamma$ et aux deux orientations possibles pour chaque composante. 
Une collection de géodésiques est dite remplissante si son complémentaire ne contient aucune géodésique fermée. 

\begin{theo}\cite{Birkhoff, FriedAnosov}\label{BirkhoffFried}
Si $\Sigma$ est une surface hyperbolique orbifoldique et $\gamma$ une collection remplissante de g\'eod\'esiques p\'eriodiques de~$\Sigma$, alors $\vecgamma$ borde une section de Birkhoff pour~$\fgeod$ dans~$\U\Sigma$, avec multiplicité~$-1$ pour chaque composante de bord. 
\end{theo}

Notons que la variété~$\U\Sigma$ n'étant pas une sphère d'homologie, un entrelacs peut border plusieurs surfaces non homologues. 
Dans le contexte du théorème~\ref{BirkhoffFried} les différentes sections de Birkhoff bordées par~$\vecgamma$ ont été classifiées avec Cossarini~\cite{DC}.

Il est naturel de chercher parmi les constructions précédentes lesquelles donnent des sections de petit genre. 
Il se trouve que le genre~$0$ est exclu pour les flots géodésiques (car les feuilletages stables et instables sont orientables, et une surface de genre 0 ne porte par de feuilletages orientables n'ayant pour singularités que des demi-selles au bord). 
On peut donc chercher les sections de genre 1. 

\begin{theo}\cite[théorème 1.4]{DO}\label{T:DO}
Sur une surface hyperbolique~$\Sigma$ de genre~$g$, il n'y a que 3 collections~$\gamma$ de géodésiques périodiques, modulo action du groupe modulaire~$\mathrm{MCG(\Sigma)}$, telles que $-\vecgamma$ borde une section de Birkhoff de genre 1 pour~$\fgeod$. 
\end{theo}

Le th\'eor\`eme~\ref{BirkhoffFried} n'est pas le plus g\'en\'eral possible puisqu'on a fait l'hypoth\`ese qu'on rel\`eve~$\gamma$ de fa\c con antith\'etique d'une part, et que les multiplicités sont~$-1$ sur chaque composante de bord. 
Si on consid\`ere une collection de g\'eod\'esiques orient\'ees~$\gamma$ et son relev\'e~$\vec\gamma$, on constate empiriquement que la vari\'et\'e fibre souvent (calculs personnels), mais pas toujours (contre-exemple de Th\'eo Marty). 
D'o\`u la question ouverte suivante:

\begin{ques}\label{ques}
Pour quelles collections remplissantes~$\gamma$ de g\'eod\'esiques orient\'ees la vari\'et\'e~$\U\Sigma\setminus\vec\gamma$ fibre-t-elle sur~$\Sph^1$? Quand la fibre est-elle de genre~$1$?
\end{ques}

Si la réponse aux deux questions est oui, la monodromie est en fait de type Anosov \`a feuilletages orientables, elle peut donc \^etre repr\'esent\'ee par une classe de conjugaison de matrices de~$\SLZ$ comme cela a été fait pour démontrer le théorème~\ref{Th}. 
Voici les constructions existantes, les trois premières correspondent au théorème~\ref{T:DO}.
Pour l'énoncé, on note~$\Sigma_g$ une surface hyperbolique de genre~$g$, et~$\Sigma_{g;p_1, \dots, p_n}$ une orbisurface hyperbolique de genre~$g$ avec points coniques d'ordre~$p_1, \dots, p_n$. 
Notons que les énoncés sont indépendants de la métrique, pourvu qu'elles soit hyperbolique. 
En effet, un théorème de Gromov~\cite{Gromov} affirme que les flots géodésiques associés à deux métriques hyperboliques distinctes sont en fait topologiquement conjugués. 

\begin{theo}
\label{T:Exemples}
Dans les cas suivants, la collection~$\vec\gamma$ borde une section de Birkhoff de genre 1 pour le flot géodésique, le nombre de composantes de bord est indiqu\'e, et la classe de conjugaison de l'application de premier retour est d\'ecrite en termes des g\'en\'erateurs standards~$L=(\begin{smallmatrix}1&0\\1&1\end{smallmatrix})$ et $R=(\begin{smallmatrix}1&1\\0&1\end{smallmatrix})$ de~$\SLZ$ :

\[
\begin{array}{c|c|c|c|c}
\Sigma & |\vec\gamma| & \mathrm{premier~retour} & \mathrm{sources}\\
\hline
\hline
\Sigma_{g}\quad (g\ge2) & 4g+4& L^{g-1}R^2L^{g-1}R^2 & \mathrm{[Bir17, Fri82, Ghy87, Has90]~(a)} \\
\hline
\Sigma_{g}\quad (g\ge2) & 4g+2& L^{g-1}R^4L^{g-1}R^2 & \mathrm{[Bir17, Bru94]~(b)} \\
\hline
\Sigma_{g}\quad (g\ge2) & 4g& L^{g-1}R^4L^{g-1}R^4  &  \mathrm{[Bir17, DeO21, DeL19]~(c)}\\
\hline
\hline
\Sigma_{0	;2,3,r}\quad (r\ge7) & 1& L^{r-6}R & \mathrm{[Deh15]}\\
\hline
\Sigma_{0	;2,q,r}\quad (q\ge4 ,r\ge5) & 1& L^{q-4}RL^{r-4}R &\mathrm{[Deh15]} \\
\hline
\Sigma_{0;p,q,r}\quad (p,q\ge3,r\ge4) & 1& L^{p-3}RL^{q-3}RL^{r-3}R & \mathrm{[Deh15]} \\
\hline
\Sigma_{0;p,q,r}\quad (p,q\ge3,r\ge4) & 2& L^{p+q-6}RL^{r-3}R & \mathrm{[HaM13, DeL19]~(d)} \\
\hline
\Sigma_{0;p,q,r,s}\quad (p,q,r\ge2,s\ge3) & 2& L^{p-2}RL^{q-2}RL^{r-2}RL^{s-2}R & \mathrm{[Deh15]} \\
\hline
\hline
\Sigma_{0;p_1,\dots,p_n}\quad (p_i\ge 4)\quad & n&? & \mathrm{[HaM13]} \\
\hline
\Sigma_{0;p_1,\dots,p_{2n}}\quad & 2n-2&? & \mathrm{[HaM13]} \\
\hline
\hline
\Sigma_{g;p_1,\dots,p_n}\quad (g\ge 1) & 4g+n+3&? & \mathrm{[DeS19]} \\
\end{array}
\]
\end{theo}

Pour les sources, (a) reprend la construction de Birkhoff qui a été popularisée par Fried. Le calcul du premier retour a été fait en deux temps: Ghys pour la trace, puis Hashiguchi pour la matrice complète. 
(b) reprend aussi la construction de Birkhoff, mais à partir de courbes découvertes par Brunella, qui a aussi calculé le premier retour. 
(c) reprend encore cette construction, mais avec une autre collection de courbes (remarquée indépendamment par Bonatti), le premier retour peut être calculé selon la méthode présentée dans l'article avec Liechti. 
Enfin (d) a été présenté par Hashiguchi et Minakawa, et le premier retour calculé avec Liechti. 
Pour les trois derniers exemples, le premier retour n'a pas été calculé. 

Notons que ce th\'eor\`eme n'est absolument pas exhaustif. 
En effet, on a

\begin{theo}(Minakawa, voir \cite{DShannon})\label{Minakawa}
Si $(\phi^t)_{t\in\R}$ est un flot d'Anosov qui admet une section de Birkhoff de genre~$1$, alors pour chaque matrice hyperbolique~$A\in\SLZ$ il admet une section dont le premier retour est donné par~$A$. 
\end{theo}

En particulier si un flot admet une section de Birkhoff de genre 1, il en admet une infinité. 
La dernière ligne du tableau précédent affirme que tout flot géodésique sur une orbisurface hyperbolique admet une section de Birkhoff de genre~$1$, et donc le théorème de Minakawa implique qu'il en admet une infinité. 
Cela ne clôt pas l'intérêt de la question~\ref{ques} puisque le nombre de composantes de bord dans les sections de Birkhoff du théorème~\ref{Minakawa} a tendance à cro\^itre rapidement.  
En particulier, on ne sait pas si on a recensé en~\ref{T:Exemples} toutes les collections ne comptant qu'une orbite périodique et bordant une section de Birkhoff de genre~$1$ -- probablement pas. 

%
%
%
%


\bibliographystyle{siam}

\end{document}